\definecolor{darkred}{rgb}{0.5,0,0}
\definecolor{darkgreen}{rgb}{0, 0.3,0}
\definecolor{darkblue}{rgb}{0,0,0.6}
\definecolor{LightGray}{rgb}{.6,.6,.6}
\theoremstyle{plain}
\newtheorem{proposition}{Proposition}[section]
\newtheorem{lemma}[proposition]{Lemma}
\newtheorem{corollary}[proposition]{Corollary}
\newtheorem{theorem}[proposition]{Theorem}
\theoremstyle{definition}
\newtheorem{definition}[proposition]{Definition}
\theoremstyle{remark}
\newcommand{\proofof}[1]{\ \\\noindent \emph{Proof of {#1}.}}
\newcommand{\Endproofof}{\hfill\qed\ \\\ \\}
\DeclareMathOperator*{\Aut}{Aut}
\DeclareMathOperator{\dcl}{dcl}
\newcommand{\llrr}[1]{{\llbracket #1 \rrbracket}}
\newcommand{\bigllrr}[1]{{\bigl \llbracket #1 \bigr \rrbracket}}
\newcommand{\Fr}{Fra\"{i}ss\'{e}}
\newcommand{\FP}{{\mathcal{F}_{\mathcal{P}}}}
\newcommand{\tm}{\widetilde{m}}
\newcommand{\ts}{{v}}
\newcommand{\Mbar}{\overline{\M}}
\newcommand{\Nbar}{\overline{\N}}
\newcommand{\Gbar}{{\mathcal{C}_G}}
\DeclareSymbolFont{extraup}{U}{zavm}{m}{n}
\DeclareMathSymbol{\varheart}{\mathalpha}{extraup}{86}
\DeclareMathSymbol{\vardiamond}{\mathalpha}{extraup}{87}
\def\a{{\mathbf{a}}}
\def\t{{\mathbf{t}}}
\def\x{{\mathbf{x}}}
\def\y{{\mathbf{y}}}
\def\M{{\EM{\mathcal{M}}}}
\def\N{{\EM{\mathcal{N}}}}
\def\JJ{{\EM{\mathcal{J}}}}
\def\PP{{\EM{\mathcal{P}}}}
\def\II{{\EM{\mathcal{I}}}}
\def\ww{{\EM{\mathcal{W}}}}
\newcommand{\sympar}[1]{\ensuremath{S_{#1}}}
\newcommand{\sym}{\sympar{\infty}}
\def\Str{{\mathrm{Str}}}
\def\Lomega#1{{\EM{\mc{L}_{#1, \w}}}}
\def\Lwow{\Lomega{\w_1}}
\newcommand{\st}{{\,:\,}}
\newcommand{\Nats}{\Naturals}
\def\w{\EM{\omega}}
\def\Naturals{{\EM{{\mbb{N}}}}}
\def\Rationals{{\EM{{\mbb{Q}}}}}
\def\Reals{{\EM{{\mbb{R}}}}}
\def\Rplus{{\EM{{\mbb{R}^+}}}}
\def\^{\EM{{}^{\And}}}
\def\And{\EM{\wedge}}
\def\<{\EM{\langle}}
\def\>{\EM{\rangle}}
\def\indent{\hspace*{2em}}
\newcommand{\defn}[1]{\textbf{#1}}
\newcommand{\defas}{:=}
\def\EM#1{\ensuremath{#1}}
\def\mbb#1{\EM{\mathbb{#1}}}
\def\mc#1{\EM{\mathcal{#1}}}
\newcommand{\ER}{Erd\H{o}s--R\'{e}nyi}
\definecolor{MyGreen}{rgb}{.75,0,.75}
\definecolor{RealGreen}{rgb}{0,1,0}
\definecolor{DarkGreen}{rgb}{0.1,0.4,0.1}
\definecolor{ActualGreen}{rgb}{0.0,0.5,0.0}
\definecolor{MyBlue}{rgb}{0,0,1}
\definecolor{MyRed}{rgb}{1,0,0}
\definecolor{DarkRed}{rgb}{0.5,0,0}
\definecolor{Purple}{rgb}{0.4,0,0.4}
\definecolor{Orange}{rgb}{0.85,0.45,0.45}
\definecolor{WowColor}{rgb}{.75,0,.75}
\definecolor{SubtleColor}{rgb}{0,0,.50}
\newcounter{margincounter}
\newcommand{\HideNate}[1]{}
\begin{document}


\title[Orbits admitting a unique invariant measure]
{A classification of orbits admitting \\ a unique invariant measure}

\author[Ackerman]{Nathanael Ackerman}
\address{
\newline
Department of Mathematics\newline
Harvard University \newline
Cambridge, MA 02138 \newline
USA}
\email{nate@math.harvard.edu}

\author[Freer]{Cameron Freer}
\address{
\newline
Department of Brain and Cognitive Sciences\newline
Massachusetts Institute of Technology\newline
Cambridge, MA 02139 \newline
USA
}
\email{freer@mit.edu}

\author[Kwiatkowska]{\\Aleksandra Kwiatkowska}
\address{{\newline Mathematical Institute \newline
University of Bonn \newline
Endenicher Allee 60 \newline
53115 Bonn \newline
Germany}}
\email{akwiatk@math.uni-bonn.de}

\author[Patel]{Rehana Patel}
\address{\newline
Franklin W.\ Olin College of Engineering \newline
Needham, MA 02492 \newline
USA}
\email{rehana.patel@olin.edu}


\begin{abstract}
	We consider the 
space of countable structures with fixed underlying set
in a given countable language.
We show that the
number of ergodic 
probability measures on this space that are
$\sym$-invariant and
concentrated on 
a single isomorphism class
must be 
zero, or one, or continuum.
Further, 
such an
isomorphism class 
admits a unique $\sym$-invariant probability measure
precisely when
the structure
is 
highly homogeneous;
by
a result of Peter~J.\ Cameron, 
these are the structures that are
interdefinable with one
of the five reducts of the rational linear order $(\Rationals, <)$.
\end{abstract}

{\let\thefootnote\relax\footnotetext{
{\it Keywords:}  \ invariant measure, 
		 high homogeneity,
		  unique ergodicity.\\
\indent \ 2010~{\it Mathematics Subject Classification:} \ 
03C98,
37L40,
60G09,
20B27.
%
}

\maketitle


\setcounter{page}{1}
\thispagestyle{empty}

\begin{small}
\begin{tiny}
\renewcommand\contentsname{\!\!\!\!}
\setcounter{tocdepth}{3}
\tableofcontents
\end{tiny}
\end{small}

\section{Introduction}
\label{intro-section}

A countable structure in a countable language can be said to admit a random symmetric construction when there is a probability measure on its isomorphism class (of structures having a fixed underlying set)
that is invariant under the logic action of $\sym$.
Ackerman, Freer, and Patel \cite{AFP} characterized those structures admitting such invariant measures. 
In this paper, we further explore this setting by determining
the possible numbers of such ergodic invariant measures, and by characterizing
when there is a unique invariant measure.

A dynamical system is said to be uniquely ergodic when it admits a unique, hence necessarily ergodic, invariant measure. 
In most classical ergodic-theoretic settings, the dynamical system consists of a measure space along with a single map, or at most a countable semigroup of transformations; unique ergodicity has been of longstanding interest for such systems.
In contrast, 
unique ergodicity for systems consisting of a larger space of transformations (such as the automorphism group of a structure) has been a focus of more recent research, 
notably
that of 
Glasner and Weiss \cite{MR1937832}, and of
Angel, Kechris, and Lyons \cite{MR3274785}.

When studying
continuous dynamical systems, 
one often 
considers
minimal flows, i.e., continuous actions on compact Hausdorff spaces such that each orbit is dense;
\cite{MR3274785} 
examines
unique ergodicity in this setting.
In the present paper, we are interested in unique ergodicity of actions where the underlying space need not be compact and there is just one orbit: We
characterize 
when the logic action of the group $\sym$ on an orbit
is uniquely ergodic.

Any transitive $\sym$-space
is isomorphic to the action of $\sym$ on the isomorphism class of a countable structure, restricted to
a fixed underlying set.
The main result of
\cite{AFP} 
states that 
such an isomorphism class 
admits at least one $\sym$-invariant measure
precisely when the structure has trivial definable closure.
Here we 
characterize those countable structures whose isomorphism classes admit \emph{exactly one} such measure, and show via a result of 
Peter J.\ Cameron
that the five reducts of $(\Rationals, <)$ are essentially the only ones.
Furthermore, if the isomorphism class of a countable structure
admits more than one $\sym$-invariant measure, it must
admit continuum-many ergodic such measures.

\subsection{Motivation and main results}
\label{background-subsection}
In this paper we consider,
for a given countable language $L$, 
the collection 
of countable $L$-structures 
having the 
natural numbers $\Nats$ as underlying set.
This collection can be made into a measurable space, denoted $\Str_L$, in a standard way, as we describe in Section~\ref{prelim-sec}.

The group $\sym$ of permutations of $\Nats$ acts naturally on $\Str_L$ by permuting the underlying set of elements. This action is known as the
\emph{logic action} of $\sym$ on $\Str_L$, and has been studied extensively in descriptive set theory.
For details, see \cite[\S2.5]{MR1425877} or \cite[\S11.3]{MR2455198}.
Observe
that the $\sym$-orbits of $\Str_L$ are precisely the isomorphism classes of structures in $\Str_L$.

By an \emph{invariant measure} on $\Str_L$, we will always mean a
Borel probability measure on $\Str_L$ that is invariant under the logic action of $\sym$.
We are specifically interested in those invariant measures on $\Str_L$ that assign measure $1$ to a single orbit, i.e., the isomorphism class in $\Str_L$ of some countable $L$-structure $\M$. 
In this case we say that 
the orbit of $\M$ \emph{admits} an invariant measure, or simply that
$\M$ admits an invariant measure. 

When a countable structure $\M$ admits an invariant measure, this measure can be thought of as providing a symmetric random construction of $\M$.
The main result of
\cite{AFP} describes precisely when such a construction is possible:
A structure $\M\in\Str_L$ admits an invariant measure if and only if 
definable closure 
in
$\M$ is trivial, i.e., the pointwise stabilizer in $\Aut(\M)$ of any finite tuple 
fixes no additional elements. But even when there \emph{are} invariant measures concentrated on the orbit of $\M$, it is not obvious 
how many different ones there are.

If
an orbit admits at least two invariant measures, there are trivially always continuum-many such measures, because a convex combination of any two gives an invariant measure on that orbit, and these combinations yield distinct measures.  
It is therefore useful to count instead the invariant measures that are not decomposable in this way, namely the \emph{ergodic} ones.
It is a standard fact that the invariant measures on $\Str_L$ form a simplex in which the ergodic invariant measures are precisely the \emph{extreme} points, i.e., those that cannot be written as a nontrivial convex combination of invariant measures. Moreover, every invariant measure is a mixture of these extreme invariant measures. (For more details, see \cite[Lemma~A1.2 and Theorem~A1.3]{MR2161313} and \cite[Chapters 10 and 12]{MR1835574}.)
Thus when counting invariant measures on an orbit, the interesting quantity to consider is the number of ergodic invariant measures.

Many natural examples admit more than one invariant measure. For instance, consider the \ER\ 
\cite{MR0120167} 
construction 
$G(\Nats, p)$ 
of the Rado graph, a countably infinite random graph in which edges have
independent probability $p$, where $0<p<1$. 
This yields continuum-many ergodic invariant measures concentrated on the orbit of the Rado graph, as
each value of $p$ leads to a different ergodic invariant measure.

On the other hand, some countable structures admit just one invariant measure.
One such example is well-known:
The 
rational linear order $(\Rationals, <)$ admits 
a unique 
invariant measure, which can be described as follows.
For every 
finite
$n$-tuple of distinct elements of $\Rationals$,
each of the $n!$-many orderings of the $n$-tuple must be assigned the same
probability, by $\sym$-invariance.
This collection
of finite-dimensional marginal distributions determines a (necessarily unique)
invariant 
measure on $\Str_L$, by the Kolmogorov extension theorem (see, e.g., \cite[Theorems~6.14 and 6.16]{MR1876169}). 
This probability measure 
can be shown to be
concentrated on the orbit of $(\Rationals, <)$ and
is sometimes known as the \emph{Glasner--Weiss measure};
for details, see 
\cite[Theorems 8.1 and~8.2]{MR1937832}.
We will discuss this measure and its construction further in Section~\ref{uniqueness-for-hh-section}.

In fact, these examples illustrate the only possibilities:
Either a countable structure admits
no invariant measure, or a unique invariant measure,
or continu\-um-many 
ergodic invariant measures.
Furthermore, 
a countable structure admits a unique invariant measure precisely when it has the property known as \emph{high homogeneity}.
The main result of this paper is the following trichotomy. It refines the dichotomy obtained in \cite{AFP}.

\begin{theorem}\label{simpler-theorem}
Let $\M$ be a countable structure in a countable language $L$.
Then exactly one of the following holds:

\begin{itemize}
\item[($0$)] The structure $\M$ has nontrivial definable closure, in which case there is
no $\sym$-invariant Borel probability measure on $\Str_L$ that is concentrated on 
the orbit of $\M$.

\item[($1$)] The structure $\M$ is 
highly homogeneous, in which case there is a 
unique $\sym$-invariant Borel probability measure 
on $\Str_L$ that is concentrated on 
the orbit of $\M$.

\item[(${2^{\aleph_0}}$)] There are continuum-many ergodic $\sym$-invariant
Borel probability measures 
on $\Str_L$ that are concentrated on 
the orbit of $\M$.
\end{itemize}
\end{theorem}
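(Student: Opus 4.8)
The plan is to derive the trichotomy from the dichotomy of \cite{AFP} together with two additional facts. Case~$(0)$ is the main theorem of \cite{AFP}: an orbit admits an invariant measure precisely when the structure has trivial definable closure. What remains is to show that, among structures with trivial definable closure, the highly homogeneous ones admit a \emph{unique} invariant measure on their orbit while all others admit continuum-many \emph{ergodic} ones; exclusiveness and exhaustiveness then follow from these two claims together with \cite{AFP} and the additional observation that \emph{a highly homogeneous structure has trivial definable closure} (which is what prevents cases $(0)$ and $(1)$ from overlapping). For this last observation I would appeal to Cameron's theorem, cited above, noting that definable closure is an invariant of the permutation group and that each of the five reducts of $(\Rationals,<)$ visibly has trivial definable closure (for instance, in the betweenness reduct the map $x\mapsto 2a-x$ fixes $a$ and moves every other point).

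For the uniqueness half of case~$(1)$ I would argue that high homogeneity pins down every invariant measure $\mu$ concentrated on the orbit of $\M$ on a generating family of clopen sets. Indeed, $\mu$ is determined by the numbers $\mu(\llrr{\varphi(\a)})$ for quantifier-free $\varphi$ and distinct tuples $\a$, and by $\sym$-invariance this number depends only on the $S_n$-orbit of $\varphi$. Now $\mu$-almost every structure is isomorphic to $\M$, so almost every $\a$ realizes one of the countably many quantifier-free $n$-types realized in $\M$; and since $\Aut(\M)$ is transitive on $n$-element subsets, all of these types lie in a single $S_n$-orbit, say of size $k$. Invariance makes $\mu$ give each of them the same mass, and they sum to $1$, so each has mass $1/k$; hence every $\mu(\llrr{\varphi(\a)})$ is forced, and $\mu$ is unique. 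Existence comes from \cite{AFP} (or, explicitly, from the Glasner--Weiss measure and its analogues for the other four reducts).

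Case~$(2^{\aleph_0})$ is where the work lies. Suppose $\M$ has trivial definable closure but is not highly homogeneous, and let $n$ be least such that $\Aut(\M)$ has at least two orbits on $n$-element subsets; passing to the canonical language, in which $\M$ is ultrahomogeneous, this says that $\Age(\M)$ has a unique structure of each size below $n$ but at least two of size $n$. The goal is to build a family $(\mu_r)_{r\in(0,1)}$ of invariant measures on the orbit of $\M$, generalizing the \ER\ construction $G(\Nats,p)$ of the Rado graph: sample an independent $[0,1]$-coordinate for each element together with independent auxiliary coins, use the coins (with bias governed by $r$) to decide, among the two-or-more orbit-types available at size $n$, which one each $n$-element set receives, and read off the remaining relations from the coordinates. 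Because this sampling scheme has no global source of randomness --- it is dissociated --- each $\mu_r$ is ergodic by the representation theory of exchangeable structures \cite{MR2161313}; the $\mu_r$ are pairwise distinct because the $\mu_r$-probability that a fixed $n$-tuple realizes a prescribed branching type depends on $r$. Granting that each $\mu_r$ is concentrated on the orbit of $\M$, this produces the desired continuum of ergodic measures.

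The main obstacle is precisely that last point: showing the $\mu_r$-random structure is almost surely isomorphic to $\M$. This breaks into (i) a coherence statement --- that the structure almost surely lies in $\Age(\M)$, which should follow because the biasing only redistributes mass among configurations already in the age and the higher-arity data is read off consistently from the coordinates --- and (ii) a genericity statement --- that it almost surely realizes every one-point extension demanded by $\M$, which is where trivial definable closure enters. The machinery to carry this out is that of \cite{AFP}, in particular their duplication-of-quantifier-free-types phenomenon, which supplies the freedom both to run the basic construction and to thread the extra parameter $r$ through it; non-high-homogeneity is exactly what guarantees that this freedom amounts to a genuine continuum of choices rather than a single forced one. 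I expect this uniform coherence-and-genericity bookkeeping to be the most delicate part of the proof.
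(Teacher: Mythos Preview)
Your treatment of cases $(0)$ and $(1)$, including the reduction to the canonical structure, the uniqueness argument via $S_n$-orbits of quantifier-free types, and the appeal to Cameron for the observation that high homogeneity forces trivial definable closure, matches the paper's argument.

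The divergence, and the gap, is in case $(2^{\aleph_0})$. You propose a sampling scheme with auxiliary coins deciding the type of each $n$-element set while ``the remaining relations'' are read off from per-vertex coordinates via some fixed Borel $L$-structure $\PP$. The coherence claim you flag as delicate is where this breaks: if the $n$-ary data is set by coins independently of the coordinates while the higher-arity data comes from $\PP$ via the coordinates, then the $(n{+}1)$-type of a set as dictated by $\PP$ need not be compatible with the coin-determined types of its $n$-element subsets, so the sample need not lie in $\Age(\M)$. (Already for the Henson triangle-free graph, $n=2$ and independent edge-coins produce triangles.) Your sentence ``the biasing only redistributes mass among configurations already in the age'' presupposes exactly the consistency at issue. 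Repairing this by rebuilding $\PP$ with the parameter $r$ baked in would mean rerunning the AFP construction once per $r$ and then separately arguing that distinct $r$ give distinct measures; you gesture at this via ``duplication of quantifier-free types'' but do not carry it out, and it is not clear it can be done uniformly.

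The paper sidesteps the problem by holding a single $\PP$ fixed and varying the \emph{input measure} on $\Reals$ instead. Since $\PP$ strongly witnesses the \Fr\ theory of $\M$, every nondegenerate continuous probability measure $m$ on $\Reals$ yields an ergodic invariant $\mu_{(\PP,m)}$ on the orbit, so coherence and genericity are inherited for free from the one AFP construction already in hand. Non-high-homogeneity then supplies an $S_n$-invariant Borel set $A\subseteq\Reals^n$ (the realizations in $\PP$ of one of the competing $n$-types) with $0<m^n(A)<1$, and the remaining work is purely analytic: finite reweightings $m^\ww$ of $m$ produce continuum-many values of $(m^\ww)^n(A)$, proved by showing that a certain multilinear polynomial in the weights is nonconstant. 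The freedom you sought to insert into the combinatorics of the sampling scheme is instead exploited, much more cheaply, at the level of the measure being sampled from.
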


Moreover, 
by a result of Peter~J.\ Cameron,
the case where 
$\M$ is 
highly homogeneous 
is equivalent to
$\M$ 
being
interdefinable with a definable reduct (henceforth \emph{reduct}) of $(\mathbb{Q}, <)$, of which there are five.
In particular, this shows the known 
invariant measures on
these five to be canonical.

\subsection{Additional motivation}

The present work has been motivated by 
further considerations, which we now describe.

Fouch\'e and Nies
\cite[\S15]{LogicBlog11} 
describe one notion
of an algorithmically random presentation of a given countable structure; see also \cite{MR3016251}, \cite{MR3023193}, and \cite{MR1649077}.
In the case of
the rational linear order $(\Rationals, <)$, 
they note that their notion of randomness is in a sense canonical by virtue of the unique ergodicity of the orbit of $(\Rationals, <)$.
Hence one may
ask which other orbits of countable structures are uniquely ergodic.
Theorem~\ref{simpler-theorem}, along with the result of Cameron, shows that the orbits of
$(\Rationals, <)$ and of its reducts are essentially the only instances.

We also
note a connection with ``Kolmogorov's example'' of a transitive but non-ergodic action of $\sym$, described by Vershik in
\cite{MR2015459}.
Many 
settings in 
classical
ergodic theory permit at most one invariant measure. For example, when a separable locally compact group $G$ acts continuously and transitively on a Polish space $X$, there is at most one $G$-invariant probability measure on $X$ \cite[Theorem~2]{MR2015459}.
The action of $\sym$, however,
allows for continuum-many ergodic invariant measures on the same orbit, as 
noted above 
in the case of the \ER\ constructions; for more details, see \cite[\S3]{MR2015459}.
Indeed, many specific orbits with this property are known,
but 
the present work strengthens the sense in which this is 
typical
for $\sym$ and uniqueness is rare: There are essentially merely five exceptions to the rule of having either continuum-many ergodic invariant measures or none.

It may be an interesting question to further understand the structure of the simplex of invariant measures in the non-uniquely-ergodic case --- when, as we show in the case of a single orbit, there are continuum-many ergodic such measures or none.
Note, however, that this space will often not be compact, as the actions we consider are on spaces that are usually not compact.

\section{Preliminaries}
\label{prelim-sec}

In this paper, $L$ will always be a countable language.
We consider the space $\Str_L$ of countable $L$-structures 
having
underlying set $\Nats$, equipped with the $\sigma$-algebra of Borel sets generated by the topology described in Definition~\ref{widehat-def} below.
We will often use the notation $\x$ to denote the finite tuple of variables
$x_0\cdots x_{n-1}$, where $n = |\x|$. 

Recall that $\Lwow(L)$ denotes the infinitary language based on $L$ 
consisting of formulas that can have
countably infinite conjunctions and disjunctions, 
but only
finitely many quantifiers and free variables; for details, see 
\cite[\S16.C]{MR1321597}.

\begin{definition}
\label{widehat-def}
Given a formula 
$\varphi \in \Lwow(L)$
and
$n_0, \ldots, n_{j-1} \in \Nats$,
where $j$ is the number of free variables of $\varphi$,
define
\[
\llrr{\varphi(n_0, \dots, n_{j-1})} \defas
 \{\M \in \Str_L \st \M\models \varphi(n_0, \dots, n_{j-1})\}.
\]
\end{definition}
Sets of this form are closed under finite intersection, and 
form a basis  for the topology of $\Str_L$.

Consider
$\sym$,
the permutation group of the natural numbers $\Nats$.
This group acts on $\Str_L$ via the \defn{logic action}:
For $g\in\sym$ and $\M$, $\N\in\Str_L$, we define
$g \cdot \M = \N$ to hold
whenever
\[
R^\N(s_0, \ldots, s_{k-1}) 
\quad \text{if and only if} \quad
R^\M\bigl(g^{-1}(s_0), \ldots,  g^{-1}(s_{k-1}) \bigr)
\]
for all relation symbols $R\in L$ and $s_0, \ldots, s_{k-1} \in\Nats$, where $k$ is the arity of $R$, and similarly with constant and function symbols.
Observe that the orbit of a structure under the logic action is its isomorphism class in $\Str_L$; every such orbit is Borel by Scott's isomorphism theorem.
For more details on the logic action, see \cite[\S16.C]{MR1321597}.

We define an
\defn{invariant measure}
on $\Str_L$
to be
a Borel probability measure $\mu$
on $\Str_L$ that is invariant under the logic action of $\sym$ on $\Str_L$,
i.e., $\mu(X) = \mu(g\cdot X)$ for every Borel set $X \subseteq \Str_L$ and $g \in \sym$.
When an invariant measure on $\Str_L$ is concentrated on the orbit of some structure in $\Str_L$,
we may restrict
attention to this orbit, 
and speak
equivalently
of an 
invariant measure on 
the orbit itself.

In this paper, we are interested in invariant measures that are \emph{ergodic}.
Given an action of a group $G$ on a set $X$, and an element $g\in G$, we write $g x$ to denote the image of $x\in X$ under $g$, and for $A\subseteq X$ we write $g A \defas \{ g x \st x \in A\}$.

\begin{definition}
\label{ergodic}
Consider a Borel action of a Polish group $G$ on a standard Borel space $X$.  A probability measure $\mu$ on $X$ is \defn{ergodic} when for every Borel $B \subseteq X$ satisfying $\mu(B \vartriangle g^{-1}B ) = 0$ for each $g\in G$, either $\mu(B) = 0$ or $\mu(B)= 1$.
\end{definition}
In other words, an ergodic measure (with respect to a particular action of $G$) is one that assigns every almost $G$-invariant set either zero or full measure.
In our setting, $X$ will be one of $\Reals^\omega$ or $\Str_L$, and we will consider the group action of $\sym$ on $\Reals^\omega$ that permutes coordinates, and the logic action of $\sym$ on $\Str_L$.

A structure $\M\in\Str_L$ has \emph{trivial definable closure} when the pointwise stabilizer in $\Aut(\M)$ of an arbitrary finite tuple of $\M$ fixes no additional points:

\begin{definition}
Let $\M\in\Str_L$.
For a tuple $\a \in\M$,  the \defn{definable closure} of $\a$ in $\M$, written $\dcl_\M(\a)$, is the set of 
elements of
$\M$ that are fixed by every automorphism of $\M$
that fixes $\a$ pointwise.
The structure $\M$ has \defn{trivial definable closure} when $\dcl_\M(\a) = \a$ for every (finite) tuple $\a\in\M$.
\end{definition}

The easier direction of the main theorem of \cite{AFP} states that any structure admitting an invariant measure must have trivial definable closure.

\begin{theorem}[{\cite[Theorem~4.1]{AFP}}]
\label{negative-AFP}
Let
$\M\in\Str_L$. If $\M$ does not have trivial definable closure, then 
$\M$ does not admit an invariant measure.
\end{theorem}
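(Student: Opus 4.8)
The statement to prove is Theorem~\ref{negative-AFP}: if $\M \in \Str_L$ does not have trivial definable closure, then $\M$ admits no invariant measure.

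The plan is to argue by contrapositive: assume $\mu$ is an $\sym$-invariant Borel probability measure concentrated on the orbit of $\M$, and derive that $\dcl_\M$ is trivial. First I would unpack what failure of triviality means: there is a finite tuple $\a = a_0 \cdots a_{n-1}$ from $\M$ and an element $b \in \M \setminus \a$ such that every automorphism of $\M$ fixing $\a$ pointwise also fixes $b$. Passing through the fact that orbits under $\Aut(\M)$ correspond to definability over $\M$'s theory in $\Lwow$, this means there is a formula $\varphi(\x, y) \in \Lwow(L)$ which $\M$ satisfies of the form ``$y$ is the unique element such that $\psi(\x,y)$'' for the appropriate $\psi$; concretely, $\M \models \varphi(\a, b)$ and $\M \models \forall y\, \forall y'\, (\varphi(\x,y) \wedge \varphi(\x, y') \to y = y')$, and also $\M \models \exists y\, \varphi(\x, y)$. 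The key structural point is that being in the orbit of $\M$ forces this uniqueness-and-existence sentence to hold almost surely.

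Next I would exploit $\sym$-invariance quantitatively. Fix the tuple of natural numbers $0, 1, \ldots, n-1$ as the interpretation of $\x$. For each $m \geq n$, let $A_m = \llrr{\varphi(0, \ldots, n-1, m)}$. Since $\mu$ is concentrated on the orbit of $\M$ and $\M$ satisfies the existence statement, we have $\mu\bigl(\bigcup_{m \geq n} A_m\bigr) = 1$, so $\sum_{m \geq n} \mu(A_m) \geq 1$ — but actually, because $\mu$-almost every structure satisfies the \emph{uniqueness} clause, the sets $A_m$ for $m \geq n$ are pairwise disjoint up to $\mu$-null sets, hence $\sum_{m \geq n} \mu(A_m) \leq 1$ as well. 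Now invariance under the subgroup of $\sym$ fixing $0, \ldots, n-1$ and permuting $\{n, n+1, \ldots\}$ transitively shows $\mu(A_m) = \mu(A_n)$ for all $m \geq n$. A countably infinite sum of equal nonnegative reals that is finite must have every term zero, so $\mu(A_m) = 0$ for all $m \geq n$. But then $\mu\bigl(\bigcup_{m\geq n} A_m\bigr) = 0$, contradicting that this union has full measure. Hence no such $\mu$ exists.

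The main obstacle — really the only delicate point — is the translation from ``nontrivial definable closure'' (a statement about automorphisms of $\M$) into the existence of an appropriate $\Lwow(L)$-formula $\varphi$ witnessing both existence and uniqueness on the orbit, and in particular checking that the uniqueness clause genuinely gives almost-sure disjointness of the $A_m$. This uses that orbits of $\Aut(\M)$ on tuples are type-definable in $\Lwow$ (a consequence of the Scott analysis, already invoked in the excerpt via Scott's isomorphism theorem), together with the fact that every structure in the orbit of $\M$ is isomorphic to $\M$ and hence satisfies exactly the same $\Lwow$-sentences, so the existence/uniqueness sentence transfers. Once $\varphi$ is in hand, the measure-theoretic counting argument is routine. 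One should also be slightly careful that $b \notin \a$ so that $A_m$ for $m \in \{0, \ldots, n-1\}$ need not be considered, or handle those finitely many indices separately; either way they contribute nothing to the contradiction.
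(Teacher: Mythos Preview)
This paper does not itself prove Theorem~\ref{negative-AFP}; it is quoted from \cite[Theorem~4.1]{AFP} without argument. Your approach is the standard counting argument and is essentially what appears there, so in spirit you match the intended proof.

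There is, however, one genuine slip. You assert $\mu\bigl(\bigcup_{m\ge n} A_m\bigr)=1$ on the grounds that an ``existence sentence'' transfers across the orbit. But the formula $\varphi(\x,y)$ coming from the Scott analysis isolates the $\Aut(\M)$-orbit of $(\a,b)$, so $(\forall\x)(\exists y)\,\varphi(\x,y)$ is in general \emph{false} in $\M$: only tuples $\x$ realizing the type of $\a$ admit such a $y$. For a random $\N$ in the orbit of $\M$, the tuple $(0,\dots,n-1)$ need not realize that type, and then $\N$ lies in no $A_m$. So the union can have measure strictly less than $1$ --- possibly $0$, as far as your argument shows.

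The repair uses only what you already have. The sentence $(\exists\x)(\exists y)\,\varphi(\x,y)$ \emph{is} true in $\M$, hence holds $\mu$-a.s., so
\[
\mu\Bigl(\,\bigcup_{\substack{k_0,\dots,k_n\in\Nats\\ \text{distinct}}} \llrr{\varphi(k_0,\dots,k_n)}\Bigr)=1.
\]
By full $\sym$-invariance (not merely invariance under the stabilizer of $0,\dots,n-1$), every set in this countable union has the same measure $\mu(A_n)$, forcing $\mu(A_n)>0$. Your uniqueness clause, on the other hand, \emph{does} globalize to a sentence: if $(\a',b')$ and $(\a',b'')$ both lie in the $\Aut(\M)$-orbit of $(\a,b)$, composing the witnessing automorphisms yields one fixing $\a$, hence fixing $b$, so $b'=b''$. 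Thus $(\forall\x)(\forall y)(\forall y')\bigl(\varphi(\x,y)\wedge\varphi(\x,y')\to y=y'\bigr)$ holds in $\M$ and transfers to the orbit, giving almost-sure disjointness of the $A_m$. Now $\sum_{m\ge n}\mu(A_m)\le 1$ with all terms equal to $\mu(A_n)>0$ is the desired contradiction.
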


This corresponds to case (0) of Theorem~\ref{simpler-theorem}.

\subsection{Canonical structures and interdefinability}
\label{canonical-prelim}

In the proof of our main theorem, we will work in the setting of 
\emph{canonical languages} and \emph{canonical structures}.
We provide a brief description of these notions here; for
more details, see \cite[\S2.5]{AFP}.

\begin{definition}
Let $G$ be a closed 
subgroup of $\sym$, and consider the action of $G$ on $\Nats$.
Define the \defn{canonical language for $G$} to be 
the (countable) relational language $L_{G}$ that
consists of, for
each $k\in\Nats$ and
$G$-orbit
$E\subseteq \Nats^k$,
a $k$-ary relation symbol $R_E$.
Then define the \defn{canonical structure for $G$}
to be the structure
$\Gbar \in \Str_{L_{G}}$ in which,
for each
$G$-orbit $E$,
the interpretation
of $R_E$ is the set $E$.
\end{definition}

\begin{definition}
\label{def-canonical}
Given a structure $\M \in \Str_L$, 
define the \defn{canonical language for $\M$},
written $L_{\Mbar}$,  to be the countable relational language $L_{G}$ where $G \defas \Aut(\M)$. Similarly, 
define the \defn{canonical structure for $\M$},
written $\Mbar$,  to be the countable $L_{\Mbar}$-structure $\Gbar$.
\end{definition}

Structures that are \emph{interdefinable}, in the following sense,
will be regarded as interchangeable for purposes of our classification.

\begin{definition}
\label{def-interdef}
Let $\M$ and $\N$ be 
structures in (possibly different) countable languages, both having
underlying set $\Nats$.
Then $\M$ and $\N$ are said to be \defn{interdefinable} 
when they have the same canonical language and same canonical structure.
\end{definition}
Note that
two structures are interdefinable if and only 
if
there is an
\emph{$\Lwow$-interdefinition} between them,
in the terminology of \cite[Definition~2.11]{AFP};
see the discussion after \cite[Lemma~2.13]{AFP} for details.

By Definitions~\ref{def-canonical} and \ref{def-interdef}, it is immediate that a structure $\M\in\Str_L$ and its canonical structure $\Mbar$ are interdefinable.

\begin{proposition}
\label{wlog-prop}
Let $\M\in\Str_L$. There is a Borel bijection, respecting the action of $\sym$, between the orbit of $\M$ 
in $\Str_L$
and the orbit of its canonical structure $\Mbar$
in $\Str_{L_{\Mbar}}$.
In particular, this map induces a bijection between
the set of ergodic invariant measures 
on the orbit of $\M$ 
and the set of ergodic invariant measures
on the orbit of $\Mbar$.
\end{proposition}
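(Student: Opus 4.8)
The plan is to make explicit the Borel bijection between the orbit of $\M$ in $\Str_L$ and the orbit of $\Mbar$ in $\Str_{L_{\Mbar}}$, and then to check that it transports invariant measures and ergodicity in both directions. First I would observe that, since $\M$ and $\Mbar$ share the same underlying set $\Nats$ and the same automorphism group $G := \Aut(\M)$ (the interpretations of the canonical relation symbols $R_E$ are literally the $G$-orbits $E \subseteq \Nats^k$), one defines the map $\Phi$ on the orbit of $\M$ as follows: for $\N$ in the orbit of $\M$, pick any $g \in \sym$ with $g \cdot \M = \N$ and set $\Phi(\N) := g \cdot \Mbar$. The key point to verify is that this is well-defined: if $g \cdot \M = g' \cdot \M$ then $g^{-1} g' \in \Aut(\M) = G = \Aut(\Mbar)$, hence $g \cdot \Mbar = g' \cdot \Mbar$. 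One then checks $\Phi(h \cdot \N) = h \cdot \Phi(\N)$ directly from the definition, so $\Phi$ is $\sym$-equivariant, and its image is exactly the orbit of $\Mbar$. The inverse is defined symmetrically using the same orbit-of-$\M$/orbit-of-$\Mbar$ correspondence and representatives $g \in \sym$, so $\Phi$ is a bijection of orbits.

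Next I would address Borel measurability of $\Phi$ and $\Phi^{-1}$. The cleanest route is to show $\Phi$ is continuous on the orbit (with the subspace topology), or to give an explicit $\Lwow$-description. By the discussion after Lemma~2.13 of \cite{AFP}, $\M$ and $\Mbar$ being interdefinable means each relation of $\Mbar$ is $\Lwow(L)$-definable in $\M$ without parameters, and vice versa. So for a relation symbol $R_E$ of $L_{\Mbar}$ there is $\varphi_E \in \Lwow(L)$ with $\Mbar \models R_E(\bar a) \iff \M \models \varphi_E(\bar a)$, and since every structure in the orbit of $\M$ is an isomorphic copy via some $g \in \sym$ (which preserves both $\Lwow$-definability and the canonical relations), we get $\Phi(\N) \models R_E(\bar a) \iff \N \models \varphi_E(\bar a)$ for all $\N$ in the orbit of $\M$. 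This shows the preimage under $\Phi$ of a basic open set $\llrr{R_E(\bar a)}$ is the Borel (indeed $\Lwow$-definable) set $\llrr{\varphi_E(\bar a)}$ intersected with the orbit; since these generate the Borel $\sigma$-algebra on the orbit of $\Mbar$, $\Phi$ is Borel. The symmetric argument, using $\Lwow(L_{\Mbar})$-definitions of the relations, functions, and constants of $L$ inside $\Mbar$, shows $\Phi^{-1}$ is Borel. Both orbits are standard Borel (each is Borel in its ambient space by Scott's theorem), so $\Phi$ is a Borel isomorphism of standard Borel $\sym$-spaces.

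Finally, I would deduce the statement about invariant measures. Given an invariant measure $\mu$ on the orbit of $\M$, the pushforward $\Phi_* \mu$ is a Borel probability measure on the orbit of $\Mbar$, and $\sym$-equivariance of $\Phi$ gives $\Phi_*\mu(g \cdot X) = \mu(\Phi^{-1}(g \cdot X)) = \mu(g \cdot \Phi^{-1}(X)) = \mu(\Phi^{-1}(X)) = \Phi_*\mu(X)$, so $\Phi_*\mu$ is invariant; likewise $(\Phi^{-1})_*$ sends invariant measures back, and these operations are mutually inverse. For ergodicity: if $B$ is Borel in the orbit of $\Mbar$ with $\Phi_*\mu(B \symdiff g^{-1} B) = 0$ for all $g$, then $A := \Phi^{-1}(B)$ satisfies $\mu(A \symdiff g^{-1} A) = \mu(\Phi^{-1}(B \symdiff g^{-1} B)) = \Phi_*\mu(B \symdiff g^{-1}B) = 0$ (using $\Phi^{-1}(g^{-1}B) = g^{-1}\Phi^{-1}(B)$), so ergodicity of $\mu$ gives $\mu(A) \in \{0,1\}$, hence $\Phi_*\mu(B) \in \{0,1\}$; the converse direction is identical. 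Thus $\mu \mapsto \Phi_*\mu$ restricts to a bijection between the ergodic invariant measures on the two orbits. I expect the main obstacle to be the measurability claim — specifically, being careful that the $\Lwow$-interdefinition from \cite{AFP} genuinely yields a description of $\Phi$ valid on the whole orbit (not just at $\M$ itself), which is where equivariance of $\Lwow$-definability under $\sym$ does the work; once that is in hand the measure-theoretic transfer is routine.
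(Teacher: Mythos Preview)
Your proposal is correct and takes essentially the same approach as the paper. The paper's proof simply observes that $\M$ and $\Mbar$ are interdefinable and then cites \cite[Lemma~2.14]{AFP}, noting that the proof of that lemma provides explicit maps between $\Str_L$ and $\Str_{L_{\Mbar}}$ which restrict to the desired Borel $\sym$-equivariant bijection on the orbits; you have effectively unpacked that citation, writing down the map via representatives $g\in\sym$, verifying well-definedness from $\Aut(\M)=\Aut(\Mbar)$, and using the $\Lwow$-interdefinition to check Borelness --- together with the routine pushforward argument for invariance and ergodicity that the paper leaves implicit.
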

\begin{proof}
First observe that the orbit of $\M$ and the orbit of $\Mbar$ are each Borel spaces that inherit the logic action.
The structures $\M$ and $\Mbar$ are interdefinable,
and so
\cite[Lemma~2.14]{AFP}  applies. The
proof of this lemma
provides explicit maps between $\Str_L$ and $\Str_{L_{\Mbar}}$
which, when restricted respectively to the orbit of $\M$ and of $\Mbar$,
have
the desired property.
\end{proof}

We immediately obtain the following corollary.
\begin{corollary}
\label{wlog-corollary}
Let $\M\in\Str_L$. Then $\M$ and its canonical structure $\Mbar$ admit the same number of ergodic invariant measures.
\end{corollary}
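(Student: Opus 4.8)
The plan is to read the corollary off directly from Proposition~\ref{wlog-prop}. That proposition supplies a Borel bijection between the orbit of $\M$ in $\Str_L$ and the orbit of $\Mbar$ in $\Str_{L_{\Mbar}}$ which commutes with the logic action of $\sym$, and it explicitly records that the induced pushforward is a bijection between the ergodic invariant measures on the orbit of $\M$ and the ergodic invariant measures on the orbit of $\Mbar$. So the only thing left to spell out is that the phrase ``number of ergodic invariant measures'' in the statement of the corollary refers to the same quantity as in Proposition~\ref{wlog-prop}, i.e.\ that counting ergodic invariant measures on $\Str_L$ concentrated on the orbit of $\M$ is the same as counting ergodic invariant measures on the orbit of $\M$ regarded as a standard Borel space in its own right.

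First I would recall that the orbit of $\M$ is a $\sym$-invariant Borel subset of $\Str_L$ (by Scott's isomorphism theorem, as noted in Section~\ref{prelim-sec}), hence inherits the logic action and forms a standard Borel space. An invariant probability measure $\mu$ on $\Str_L$ concentrated on the orbit of $\M$ restricts to an invariant probability measure on that orbit, and conversely any invariant probability measure on the orbit extends by zero to an invariant measure on $\Str_L$ concentrated there; these two operations are mutually inverse. This correspondence preserves ergodicity in the sense of Definition~\ref{ergodic}: since the complement of the orbit is $\mu$-null, every Borel $B \subseteq \Str_L$ agrees up to a $\mu$-null set with its intersection with the orbit, and for a Borel $B$ contained in the orbit the condition $\mu(B \vartriangle g^{-1}B) = 0$ for all $g \in \sym$, and the value of $\mu(B)$, are computed identically whether $B$ is viewed inside $\Str_L$ or inside the orbit. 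Hence the ergodic invariant measures on $\Str_L$ concentrated on the orbit of $\M$ are in natural bijection with the ergodic invariant measures on the orbit of $\M$, and likewise for $\Mbar$. Composing with the bijection furnished by Proposition~\ref{wlog-prop} yields a bijection between the two sets being counted, so their cardinalities coincide.

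There is essentially no obstacle here: the substantive content lives in Proposition~\ref{wlog-prop} (and, behind it, in \cite[Lemma~2.14]{AFP}), and the corollary is a bookkeeping consequence. The only point that warrants a line of verification is that restricting a measure to a $\sym$-invariant Borel orbit neither creates nor destroys ergodicity, which is immediate from Definition~\ref{ergodic} because the almost-invariant Borel subsets of $\Str_L$ coincide, modulo $\mu$-null sets, with the almost-invariant Borel subsets of the orbit.
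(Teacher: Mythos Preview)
Your proposal is correct and follows the same approach as the paper, which simply records the corollary as an immediate consequence of Proposition~\ref{wlog-prop} without further argument. Your additional paragraph justifying the identification between ergodic invariant measures on $\Str_L$ concentrated on the orbit and ergodic invariant measures on the orbit itself is a harmless (and arguably welcome) elaboration of a point the paper leaves implicit.
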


The following two results are straightforward.

\begin{lemma}[{\cite[Lemma~2.15]{AFP}}]
\label{trivialdcl-interdef}
Let $\M$ and $\N$ be interdefinable structures in (possibly different) countable languages, both having underlying set $\Nats$. Then $\M$ has trivial definable closure if and only if $\N$ does.
\end{lemma}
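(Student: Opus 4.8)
\proofof{Lemma~\ref{trivialdcl-interdef}}
The plan is to reduce everything to the combinatorics of orbits of finite tuples under the automorphism groups, which is exactly the data recorded by the canonical language and canonical structure. First I would recall that, by Definition~\ref{def-interdef}, the hypothesis that $\M$ and $\N$ are interdefinable means precisely that $L_{\Mbar} = L_{\Nbar}$ and $\Mbar = \Nbar$; in particular $\Aut(\M)$ and $\Aut(\N)$ have exactly the same orbits on $\Nats^k$ for every $k$, since those orbits are the interpretations of the relation symbols of the common canonical language in the common canonical structure.

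The key observation is that trivial definable closure is itself a statement purely about these orbits. Indeed, for a finite tuple $\a$ and an element $b$, one has $b \in \dcl_\M(\a)$ if and only if the only element $b'$ with $\a b'$ lying in the same $\Aut(\M)$-orbit as $\a b$ (inside $\Nats^{|\a|+1}$) is $b$ itself; that is, $b \notin \dcl_\M(\a)$ exactly when the $\Aut(\M)$-orbit of $\a b$ contains some $\a b'$ with $b' \neq b$. This is the standard characterization of definable closure via orbits, valid for any structure on $\Nats$. So $\M$ has trivial definable closure if and only if, for every $k$ and every $\Aut(\M)$-orbit $E \subseteq \Nats^{k+1}$, the projection of $E$ to the first $k$ coordinates is not injective on $E$ unless the relevant last-coordinate fibre is a singleton — more cleanly, $\M$ fails to have trivial definable closure iff there is some orbit $E \subseteq \Nats^{k+1}$ and tuples $\a b, \a b' \in E$ with $b \neq b'$. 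Since the orbits of $\Aut(\M)$ and $\Aut(\N)$ on each $\Nats^{k+1}$ coincide, this condition holds for $\M$ if and only if it holds for $\N$, which gives the lemma.

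Alternatively, and perhaps more in the spirit of the surrounding development, I would invoke Proposition~\ref{wlog-prop} (or rather the underlying \cite[Lemma~2.14]{AFP}): interdefinability gives a $\sym$-equivariant Borel bijection between the orbit of $\M$ and the orbit of $\N$. One could then note that trivial definable closure is equivalent to the orbit admitting an invariant measure by the full main theorem of \cite{AFP}, but that would be circular here; instead the clean route is the orbit-counting characterization of the previous paragraph, applied to the common canonical structure. The main (and really only) obstacle is bookkeeping: making sure the orbit-based characterization of $\dcl$ is stated correctly for arbitrary structures with underlying set $\Nats$ and tuples allowed to have repeated entries, and observing that passing between $L$ and $L_{\Mbar}$ does not change $\Aut(\M)$ as an abstract permutation group of $\Nats$ — both of which are immediate from the definitions. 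Everything else is a direct translation.
\Endproofof
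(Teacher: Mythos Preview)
The paper does not actually prove this lemma; it is cited from \cite[Lemma~2.15]{AFP} and introduced only with the remark that it is ``straightforward.'' So there is no paper-side proof to compare against, and your task is just to give a clean argument.

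Your underlying idea is correct and is exactly the intended one: interdefinability forces $\Aut(\M)$ and $\Aut(\N)$ to have the same orbits on every $\Nats^k$ (indeed, since $\Aut(\M)$ is closed and $\Aut(\Mbar)$ equals the closure of $\Aut(\M)$, one has $\Aut(\M)=\Aut(\Mbar)=\Aut(\Nbar)=\Aut(\N)$ outright), and definable closure is computed entirely from this action. Your characterization ``$b\notin\dcl_\M(\a)$ exactly when the $\Aut(\M)$-orbit of $\a b$ contains some $\a b'$ with $b'\neq b$'' is right and suffices.

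However, the sentence you label ``more cleanly'' is misstated: you write that $\M$ \emph{fails} to have trivial definable closure iff some orbit $E\subseteq\Nats^{k+1}$ contains $\a b$ and $\a b'$ with $b\neq b'$. That condition witnesses $b\notin\dcl_\M(\a)$, not failure of trivial dcl; failure of trivial dcl is the existence of some $\a$ and some $b\notin\a$ for which \emph{no} such $b'$ exists. Either drop that sentence and rely on the correct characterization you already gave, or rewrite it with the quantifiers the right way round. The second paragraph (about invoking the full main theorem of \cite{AFP}) is correctly flagged as circular and should simply be deleted; it adds nothing once the orbit argument is in place.
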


\begin{corollary}
\label{trivialdcl-canonical}
Let $\M\in\Str_L$. Then $\M$ has trivial definable closure if and only if its canonical structure $\Mbar$ has trivial definable closure.
\end{corollary}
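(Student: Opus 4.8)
The plan is to observe that this is an immediate consequence of the two facts already recorded in the excerpt. First, by Definitions~\ref{def-canonical} and~\ref{def-interdef}, the structure $\M$ and its canonical structure $\Mbar$ have the same canonical language and the same canonical structure, and hence are interdefinable in the sense of Definition~\ref{def-interdef}; this was noted just before the statement of Proposition~\ref{wlog-prop}. Second, Lemma~\ref{trivialdcl-interdef} states that among interdefinable structures on underlying set $\Nats$, one has trivial definable closure precisely when the other does.

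So the proof I would write is simply: apply Lemma~\ref{trivialdcl-interdef} with $\N \defas \Mbar$, using that $\M$ and $\Mbar$ are interdefinable. There is no real obstacle here — it is a one-line deduction — and the only thing to be careful about is citing the correct prior facts (the interdefinability remark and Lemma~\ref{trivialdcl-interdef}) rather than re-deriving them. One could alternatively give a direct argument: since $\dcl$ is computed from $\Aut(\M)$, and $\Aut(\M) = \Aut(\Mbar)$ because $\Mbar$ is the canonical structure for the group $\Aut(\M)$ acting on $\Nats$, the definable closure operators of $\M$ and $\Mbar$ literally coincide, so triviality of one is triviality of the other; but invoking Lemma~\ref{trivialdcl-interdef} is cleaner and is the route I would take.
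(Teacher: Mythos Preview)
Your proposal is correct and matches the paper's intended approach: the paper gives no explicit proof, labeling the result as ``straightforward,'' and it is indeed an immediate consequence of Lemma~\ref{trivialdcl-interdef} applied to $\M$ and $\Mbar$, using the interdefinability remark preceding Proposition~\ref{wlog-prop}. Your alternative direct argument via $\Aut(\M) = \Aut(\Mbar)$ is also fine but unnecessary here.
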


For any $\M\in\Str_L$,
we will see 
in Lemma~\ref{wlog-lemma}
that $\M$ is highly homogeneous if and only if $\Mbar$ is;
combining
this fact
with Corollaries~\ref{wlog-corollary} and \ref{trivialdcl-canonical},
when proving
Theorem~\ref{simpler-theorem}
it will suffice to
consider instead
the number of ergodic invariant measures admitted by the canonical structure $\Mbar$.

\subsection{Ultrahomogeneous structures}
\label{subsec-ultrahom}
Countable \emph{ultrahomogeneous} relational structures play an important role throughout this paper,
as canonical structures are ultrahomogeneous and canonical languages are relational.

\begin{definition}
A countable structure $\M$ is \defn{ultrahomogeneous} if every isomorphism between finitely generated substructures of $\M$ can be extended to an automorphism of $\M$.
\end{definition}

The following fact is folklore (see also the discussion following \cite[Proposition~2.17]{AFP}).

\begin{proposition}
\label{canonical-is-ultrahom}
Let $\M\in\Str_L$. The canonical structure $\Mbar$ is ultrahomogeneous.
\end{proposition}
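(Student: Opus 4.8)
The plan is to show that the canonical structure $\Mbar = \Gbar$ for $G = \Aut(\M)$ is ultrahomogeneous by unwinding the definitions. First I would recall that since $L_{\Mbar}$ is relational, a finitely generated substructure of $\Mbar$ is just a finite subset $A \subseteq \Nats$ equipped with the induced structure, i.e., for each $k$-ary relation symbol $R_E$ the interpretation $R_E^{\Mbar} \cap A^k = E \cap A^k$. So I would take two finite tuples $\a = (a_0, \dots, a_{n-1})$ and $\b = (b_0, \dots, b_{n-1})$ enumerating such substructures, and a map $a_i \mapsto b_i$ that is an isomorphism of the induced substructures, and aim to extend it to an automorphism of $\Mbar$.

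The key step is the observation that an isomorphism between the substructures induced on $\a$ and $\b$ holds if and only if $\a$ and $\b$ lie in the same $G$-orbit of $\Nats^n$. Indeed, by the definition of the canonical language, for the $n$-ary orbit relation $R_E$ with $\a \in E$, the map being an isomorphism forces $\b \in R_E^{\Mbar} = E$, hence $\b$ is in the $G$-orbit of $\a$; conversely, if $\b = g \cdot \a$ for some $g \in G$, then since $G$ preserves every $G$-orbit, the map $a_i \mapsto b_i$ preserves all the induced relations (a tuple from $\a$ satisfies $R_F$ iff its image under $g$ does). Therefore there is some $g \in G = \Aut(\M)$ with $g(a_i) = b_i$ for all $i$. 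It then remains to check that $g$, viewed as a permutation of $\Nats$, is an automorphism of $\Mbar$: this is because every relation $R_E^{\Mbar} = E$ is a $G$-orbit and hence setwise fixed by $g$, so $g$ preserves each $R_E^{\Mbar}$, i.e., $g \in \Aut(\Mbar)$. This $g$ extends the given partial isomorphism, establishing ultrahomogeneity.

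I do not anticipate a genuine obstacle here — the statement is essentially a bookkeeping exercise — but the one point requiring care is making sure the two senses of ``automorphism'' are kept straight: an automorphism of $\M$ is a permutation of $\Nats$ preserving the $L$-structure, while an automorphism of $\Mbar$ preserves the (different) $L_{\Mbar}$-structure, and the content is precisely that these two groups coincide, both being $G$. This is immediate from the definition of the canonical structure, since $g$ preserves $E \subseteq \Nats^k$ for every $G$-orbit $E$ exactly when $g \in G$. One could alternatively cite the remark that this is folklore and the discussion following \cite[Proposition~2.17]{AFP}, but giving the short direct argument above is cleaner and self-contained.
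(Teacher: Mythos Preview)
Your argument is correct. Note, however, that the paper itself does not supply a proof of this proposition: it simply records the statement as folklore and points to the discussion following \cite[Proposition~2.17]{AFP}. So there is no proof in the paper to compare your approach against; what you have written is exactly the standard short argument that the paper elects to omit. Your closing remark already anticipates this option. One minor comment: for the ultrahomogeneity argument you only need the inclusion $G \subseteq \Aut(\Mbar)$ (any $g \in G$ preserves every $G$-orbit), which is immediate; the reverse inclusion $\Aut(\Mbar) \subseteq G$ uses that $G$ is closed in $\sym$, and while true, it is not actually required for the proof you are giving.
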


Ultrahomogeneous structures can be given 
particularly 
convenient
$\Lwow(L)$
axiomatizations via
\emph{pithy 
$\Pi_2$} sentences, which
can be thought of as ``one-point
extension axioms''.

\begin{definition}[{\cite[Definitions~2.3 and 2.4]{AFP}}]
A sentence in $\Lwow(L)$ is $\Pi_2$ when it is of the form $(\forall
\x)(\exists \y)\psi(\x, \y)$, where the (possibly empty) tuple $\x\y$
consists of distinct variables, and $\psi(\x,\y)$ is quantifier-free.  A
countable theory $T$ of $\Lwow(L)$ is $\Pi_2$ when every sentence
in $T$
is $\Pi_2$.

A $\Pi_2$ sentence $(\forall\x)(\exists\y)\psi(\x,\y) \in \Lwow(L)$,
where $\psi(\x,\y)$ is quanti\-fier-free, is said to be \defn{pithy} when
the tuple $\y$ consists of precisely one variable.  A countable $\Pi_2$
theory $T$ of $\Lwow(L)$ is said to be pithy when every sentence in $T$ is
pithy.  Note that we allow the degenerate case where $\x$ is the empty
tuple and $\psi$ is of the form $(\exists y)\psi(y)$.
\end{definition}

The following result 
follows from 
essentially the same
proof 
as 
\cite[Proposition~2.17]{AFP}.
It states that a Scott sentence for an ultrahomogeneous relational structure is equivalent to a theory of a particular syntactic form.
\begin{proposition}
\label{pithyconseq}
Let $L$ be relational and let $\M\in\Str_L$ be ultrahomogeneous.
There is a countable
$\Lwow(L)$-theory,
every sentence of which is
pithy $\Pi_2$,
and all of whose countable models are
isomorphic to 
$\M$.
\end{proposition}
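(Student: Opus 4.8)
The plan is to build the desired theory directly from a Scott analysis of $\M$, taking advantage of the fact that an ultrahomogeneous relational structure has an especially simple Scott sentence. First I would recall that since $L$ is relational and $\M$ is ultrahomogeneous, finitely generated substructures are just finite substructures, and two finite tuples realize the same quantifier-free type exactly when they lie in the same $\Aut(\M)$-orbit; moreover, by ultrahomogeneity, the quantifier-free type of a tuple already determines its $\Lwow(L)$-type. Thus the isomorphism type of $\M$ is captured by (i) the set of quantifier-free types of finite tuples that are realized in $\M$, together with (ii) the one-point extension data saying which realized types can be extended by an additional element to a given larger realized type.

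Next I would write down the theory $T$. For each quantifier-free type $p(\x)$ realized in $\M$ and each way of extending it to a realized quantifier-free type $q(\x,y)$ on one more variable, include the pithy $\Pi_2$ sentence $(\forall \x)\bigl(p(\x) \rightarrow (\exists y)\, q(\x,y)\bigr)$; here $p(\x)$ is a (possibly infinite, hence $\Lwow$-expressible) conjunction of literals, so $p(\x)\rightarrow(\exists y) q(\x,y)$ rewrites as a pithy $\Pi_2$ sentence $(\forall\x)(\exists y)\psi(\x,y)$ after absorbing $\neg p(\x)$ into $\psi$. Also include, as the degenerate case, a sentence $(\exists y)\theta(y)$ asserting that some $1$-type realized in $\M$ is realized, and for each quantifier-free type \emph{not} realized in $\M$ a sentence $(\forall \x)\neg r(\x)$ — but the latter is universal, not pithy $\Pi_2$, so I would instead fold these ``omitting'' constraints into the extension axioms, arranging that the only quantifier-free types that can ever appear in a model of $T$ are the realized ones. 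Concretely, in each extension axiom $(\forall\x)(\exists y)q(\x,y)$ one lists, for \emph{every} realized $p$, the finitely-or-countably-many realized one-point extensions, and by making these the only options one forces a back-and-forth: any countable model $\N\models T$ admits, by the extension axioms, the standard back-and-forth against $\M$, yielding $\N\cong\M$. This is exactly the structure of the argument in \cite[Proposition~2.17]{AFP}, which I would cite as the template.

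The verification then has two halves. For soundness, $\M\models T$ because every extension axiom simply asserts the existence of a one-point extension that is in fact realized in $\M$ (using ultrahomogeneity to see that the relevant extension type really is realized over the given tuple). For the back-and-forth, given a countable $\N\models T$, I would build an isomorphism $\N\to\M$ stage by stage: at each stage extend the partial isomorphism by one new point of $\N$ (mapping it so that quantifier-free types are preserved, which is possible because the extension axioms of $T$, being satisfied in $\N$, guarantee the quantifier-free type of the enlarged tuple is realized in $\M$, and ultrahomogeneity of $\M$ lets us place it) and symmetrically by one new point of $\M$ (using the extension axioms of $T$ in $\N$, since the relevant extension type, being realized in $\M$, was included in $T$). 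Enumerating $\N$ and $\M$ and alternating ensures the union is a bijection and hence an isomorphism.

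The main obstacle, and the only genuinely delicate point, is handling the ``omitted'' quantifier-free types within the strict format of pithy $\Pi_2$ sentences: a naive axiomatization wants a universal sentence of the form $(\forall\x)\neg r(\x)$ to forbid unrealized configurations, and that is not $\Pi_2$. The resolution is to never allow a bad configuration to arise in the first place — each pithy extension axiom must offer \emph{only} realized extensions, so that starting from the empty tuple (whose type is trivially realized) one can never reach an unrealized type, making the forbidding axioms redundant. Getting this bookkeeping exactly right — in particular checking that in a model of $T$ every finite tuple has a realized quantifier-free type, which is what powers the back-and-forth and hence categoricity — is where the care lies, but it is routine given the relational ultrahomogeneous setting, and is precisely what is carried out in the proof of \cite[Proposition~2.17]{AFP}.
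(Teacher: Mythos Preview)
Your overall plan---extension axioms indexed by realized quantifier-free types, followed by a back-and-forth, with \cite[Proposition~2.17]{AFP} as the template---is exactly the route the paper takes (indeed, the paper's entire proof here is a reference to that proposition). But your treatment of the ``forbidding'' axioms contains a real gap.

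First, the obstacle you flag is not an obstacle. A purely universal sentence $(\forall\x)\,\neg r(\x)$ is logically equivalent to $(\forall\x)(\exists y)\,\neg r(\x)$ with a vacuous existential variable $y$, and the latter meets the paper's definition of pithy $\Pi_2$ verbatim: the tuple $\y$ consists of a single variable, and nothing in the definition requires $y$ to occur in the quantifier-free matrix. So the forbidding axioms can simply be included in $T$ in this form.

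Second, and more seriously, your proposed workaround does not accomplish what you claim. Extension axioms are existential: asserting that certain realized one-point extensions \emph{exist} over each realized type $p(\x)$ does nothing to prevent a model $\N\models T$ from also containing tuples whose quantifier-free type is \emph{not} realized in $\M$. ``Making these the only options'' is not something an existential axiom can do. Your assertion that ``in a model of $T$ every finite tuple has a realized quantifier-free type'' is precisely the content of the forbidding axioms; without them it is not ``routine'' but false. And once it fails, so does the forth step of your back-and-forth: given a partial isomorphism defined on $\a\in\N$ and a new element $a\in\N$, if the quantifier-free type of $\a a$ in $\N$ is not realized in $\M$, there is no possible image for $a$.

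The repair is immediate: include the forbidding axioms in their pithy $\Pi_2$ form with a dummy existential variable. With that addition your argument is correct and coincides with the cited proof.
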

We will call this theory the \defn{\Fr\ theory of $\M$}.

\subsection{Highly homogeneous structures}
\label{subsec-highly-hom}

\emph{High homogeneity} is the key notion in our characterization of structures admitting a unique invariant measure.

\begin{definition}[{\cite[\S2.1]{MR1066691}}]
A
structure $\M\in\Str_L$
is \defn{highly homogeneous} 
when,
for each $k\in\Nats$ and for every pair of $k$-element sets $X,Y\subseteq \M$, there is some $f\in \Aut(\M)$ such that
$Y = \{ f(x) \st x \in X\}$. 
\end{definition}

The following lemma is immediate, and allows us to generalize the notion of high homogeneity to permutation groups.

\begin{lemma}
\label{wlog-lemma}
A structure $\M\in\Str_L$
is highly homogeneous if and only if its canonical structure is.
\end{lemma}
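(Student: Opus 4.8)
The plan is to note that high homogeneity is a property of the underlying set together with the automorphism group, and that $\M$ and $\Mbar$ have the same underlying set $\Nats$ and the same automorphism group. First I would observe that the definition of high homogeneity mentions only $\Nats$ and the group $\Aut(\M)$ (respectively $\Aut(\Mbar)$), and nothing else about the language or the interpretations of its symbols; hence it suffices to prove $\Aut(\M) = \Aut(\Mbar)$.

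For that equality, recall that $\Mbar$ is by definition the canonical structure $\Gbar$ for $G := \Aut(\M)$. A permutation $h \in \sym$ lies in $\Aut(\Gbar)$ exactly when it preserves the interpretation of each relation symbol $R_E$, that is, when $h$ (acting coordinatewise) maps each $G$-orbit $E \subseteq \Nats^k$ onto itself; equivalently, when for every finite tuple $\a$ from $\Nats$ the tuples $\a$ and $h \cdot \a$ lie in the same $G$-orbit, which is to say that $h$ lies in the closure of $G$ in $\sym$. Since the automorphism group of any structure is a closed subgroup of $\sym$, we have $\overline{G} = \overline{\Aut(\M)} = \Aut(\M) = G$, and therefore $\Aut(\Mbar) = \Aut(\Gbar) = \overline{G} = G = \Aut(\M)$. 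Combining this with the previous paragraph yields the lemma.

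There is no genuine obstacle here — this is why the lemma is stated as immediate — the only step worth spelling out being the identification $\Aut(\Gbar) = \overline{G}$, which is forced by the relations of $\Gbar$ being precisely the $G$-orbits on the finite powers of $\Nats$, together with the standard fact that $\Aut(\M)$ is closed in $\sym$.
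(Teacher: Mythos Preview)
Your proposal is correct and is exactly the natural unpacking of why the paper declares the lemma ``immediate'' (the paper gives no proof at all): high homogeneity depends only on the underlying set and the automorphism group, and $\Aut(\Mbar)=\overline{\Aut(\M)}=\Aut(\M)$ since automorphism groups are closed in $\sym$. There is nothing to add.
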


\begin{definition}[{\cite[\S2.1]{MR1066691}}]
A closed permutation group $G$ on $\Nats$ 
is called \defn{highly homogeneous} 
when
its canonical structure $\Gbar$ is highly homogeneous.
\end{definition}

The crucial fact about highly homogeneous structures 
is the following.

\begin{lemma}
\label{ultra-highhom}
Let $L$ be relational and let $\M\in\Str_L$ be 
ultrahomogeneous. 
Then $\Aut(\M)$ is highly homogeneous  
if and only if for any 
$k\in \Nats$, all $k$-element 
substructures of $\M$
are isomorphic.
\end{lemma}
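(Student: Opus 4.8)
\textbf{Proof proposal for Lemma~\ref{ultra-highhom}.}

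The plan is to prove the two implications separately, using ultrahomogeneity as the bridge between the ``sets'' formulation of high homogeneity (stated for $\Aut(\M)$, which by Lemma~\ref{wlog-lemma} is the same as $\M$ being highly homogeneous) and the ``substructures'' condition on the right-hand side. The key observation throughout is that for a relational structure $\M$, a $k$-element subset $X\subseteq\M$ carries a canonical induced substructure $\M{\restriction}X$, and that an element $f\in\Aut(\M)$ satisfies $f[X]=Y$ if and only if $f$ restricts to an isomorphism $\M{\restriction}X\to\M{\restriction}Y$ that happens to extend to all of $\M$; ultrahomogeneity is exactly what is needed to pass between ``abstract isomorphism of the finite substructures'' and ``realized by a global automorphism.''

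For the forward direction, suppose $\Aut(\M)$ is highly homogeneous and fix $k\in\Nats$ and two $k$-element substructures $\M{\restriction}X$ and $\M{\restriction}Y$ with $|X|=|Y|=k$. By high homogeneity there is $f\in\Aut(\M)$ with $f[X]=Y$; since $f$ is an automorphism of $\M$, its restriction to $X$ is an isomorphism of induced substructures $\M{\restriction}X\to\M{\restriction}Y$. Hence all $k$-element substructures are isomorphic. (Note this direction does not even use ultrahomogeneity.)

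For the converse, assume that for every $k$ all $k$-element substructures of $\M$ are isomorphic, and fix a $k$-element set $X=\{a_0,\dots,a_{k-1}\}$ and a $k$-element set $Y=\{b_0,\dots,b_{k-1}\}$. We want $f\in\Aut(\M)$ with $f[X]=Y$. By hypothesis there is \emph{some} isomorphism $g\colon \M{\restriction}X\to\M{\restriction}Y$; this $g$ is in particular an isomorphism between finitely generated substructures of $\M$ (here we use that $L$ is relational, so the substructure generated by a finite set is finite), and so by ultrahomogeneity $g$ extends to an automorphism $f\in\Aut(\M)$. Then $f[X]=g[X]=Y$, as required. The only subtlety to spell out is why ``$k$-element substructure'' is the right notion, i.e.\ that in a relational language the domain of the substructure generated by $X$ is exactly $X$, so that ``$k$-element substructure'' is unambiguous and the counting matches; this is where I expect to have to be slightly careful, but it is routine. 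The genuinely load-bearing input is Proposition~\ref{canonical-is-ultrahom}-style ultrahomogeneity plus relationality, and there is no real obstacle beyond bookkeeping.
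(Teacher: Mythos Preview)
Your proposal is correct and follows essentially the same argument as the paper: the forward direction restricts a global automorphism to obtain an isomorphism of the two $k$-element substructures, and the converse uses ultrahomogeneity to extend an isomorphism of finite substructures to an automorphism. Your additional remarks on why relationality ensures that ``$k$-element substructure'' is unambiguous are helpful clarifications but do not change the substance of the proof.
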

\begin{proof}
Let $\M\in \Str_L$ be ultrahomogeneous,
and 
let $X$ and $Y$ be arbitrary substructures of $\M$ of size $k$.
If $\Aut(\M)$ is highly homogeneous, 
then 
$X$ and $Y$ are isomorphic 
via the restriction of
any $f \in \Aut(\M)$ such that 
$Y = \{ f(x) \st x \in X\}$. 
Conversely, 
if
there is some isomorphism of structures $g\colon X\to Y$, then by the ultrahomogeneity of $\M$, there is some $f\in\Aut(\M)$ extending $g$ 
to all of $\M$.
\end{proof}

Highly homogeneous structures have been classified explicitly by Cameron \cite{MR0401885},
and characterized (up to interdefinability) as the five reducts of $(\Rationals, <)$,
as we now describe.

Let $(\Rationals, <)$ be the set of rational numbers equipped with the usual order.
The following three relations are definable within $(\mathbb{Q}, <)$:

\noindent (1) The ternary linear \emph{betweenness} relation $B$, given by
\[ B(a, b, c) \iff (a < b < c) \vee (c < b < a).\]
(2) The ternary \emph{circular order} relation $K$, given by
\[K(a, b, c)\iff (a < b < c) \vee (b < c < a) \vee (c < a < b).\]
(3) The quaternary \emph{separation} relation $S$, given by
\begin{eqnarray*}
S(a, b, c, d) &\iff& \bigl(K(a, b, c) \wedge K(b, c, d) \wedge K(c, d, a)\bigr)
\\
&&
\ \ \ \vee \ 
\bigl(K(d, c, b) \wedge K(c, b, a) \wedge K(b, a, d)\bigr).
\end{eqnarray*}
The structure $(\Rationals, B)$ can be thought of as forgetting the direction of the order, $(\Rationals, K)$ as gluing the rational line into a circle, and $(\Rationals, S)$ as forgetting which way is clockwise on this circle.

The following is a consequence of Theorem 6.1 in Cameron \cite{MR0401885};
see also (3.10) of 
\cite[\S3.4]{MR1066691}.
For 
further
details, see 
\cite[Theorem~6.2.1]{MR2800979}.

\begin{theorem}[Cameron]
\label{highhom-perm-thm}
Let $G$ be a highly homogeneous structure.
 Then $G$ 
is interdefinable with 
one of the following: the set $\mathbb{Q}$ (in the empty language),
$(\mathbb{Q},<)$, $(\mathbb{Q},B)$, $(\mathbb{Q},K)$, or $(\mathbb{Q},S)$.
\end{theorem}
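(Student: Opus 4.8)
The plan is to invoke Cameron's explicit classification of highly homogeneous permutation groups and translate its conclusion into the language of interdefinability used here. First I would recall that, by Lemma~\ref{wlog-lemma}, without loss of generality I may replace the highly homogeneous structure $G$ by its canonical structure, which by Proposition~\ref{canonical-is-ultrahom} is ultrahomogeneous and lives in a relational language; interdefinability is transitive (this is immediate from Definition~\ref{def-interdef}, since it is equality of canonical language and canonical structure), so it suffices to produce one of the five listed structures interdefinable with the canonical structure for $G$. Thus the statement reduces to: every highly homogeneous closed permutation group on $\Nats$ has the same orbits on each $\Nats^k$ as one of $\Aut(\Rationals,<)$, $\Aut(\Rationals,B)$, $\Aut(\Rationals,K)$, $\Aut(\Rationals,S)$, or $\sym$ (the last being the automorphism group of $\Nats$ in the empty language). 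Since having the same orbits on all $\Nats^k$ is precisely the condition that two closed subgroups of $\sym$ coincide, and it is also equivalent to interdefinability of the corresponding canonical structures, this is a purely group-theoretic statement.

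Next I would cite Cameron's Theorem~6.1 of \cite{MR0401885}. A highly homogeneous closed permutation group $G$ is in particular highly transitive on unordered tuples, and Cameron's analysis shows that the only closed subgroups of $\sym$ that are highly homogeneous (equivalently, are $k$-set-transitive for every $k$) are exactly $\sym$, $\Aut(\Rationals,<)$, $\Aut(\Rationals,B)$, $\Aut(\Rationals,K)$, and $\Aut(\Rationals,S)$ --- these being the five groups lying between $\Aut(\Rationals,<)$ and $\sym$, together with $\Aut(\Rationals,<)$ itself. I would then note that $G$ being highly homogeneous as a permutation group means (by the Definition in Section~\ref{subsec-highly-hom}) that its canonical structure $\Gbar$ is highly homogeneous, which for an ultrahomogeneous relational structure is by Lemma~\ref{ultra-highhom} equivalent to all $k$-element substructures being isomorphic for every $k$; this in turn says $G$ acts transitively on unordered $k$-subsets of $\Nats$ for every $k$, so Cameron's hypothesis is met. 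Hence $G$ equals one of the five groups above as a permutation group, and therefore $\Gbar$ --- hence $G$ itself --- is interdefinable with the corresponding one of $\Rationals$ (empty language), $(\Rationals,<)$, $(\Rationals,B)$, $(\Rationals,K)$, $(\Rationals,S)$.

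The one point requiring care --- and the main (minor) obstacle --- is the bookkeeping between the several equivalent formulations: "highly homogeneous structure $G$'' in the statement, "highly homogeneous permutation group'', "$k$-set-transitive for all $k$'', and "closed overgroup of $\Aut(\Rationals,<)$ in $\sym$''. I would make explicit that closure of the permutation groups is automatic here (automorphism groups of structures are closed in $\sym$), so that Cameron's topological hypothesis is satisfied, and that the five groups $\sym$, $\Aut(\Rationals,<)$, $\Aut(\Rationals,B)$, $\Aut(\Rationals,K)$, $\Aut(\Rationals,S)$ are pairwise distinct and are precisely the reducts of $(\Rationals,<)$ up to interdefinability. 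I would cite \cite[Theorem~6.2.1]{MR2800979} and \cite[\S3.4, (3.10)]{MR1066691} for the convenient packaging of Cameron's result in exactly this form, so that the proof amounts to: quote the classification, check the hypothesis via Lemma~\ref{ultra-highhom}, and conclude by transitivity of interdefinability. No genuinely new argument is needed beyond assembling the cited results.
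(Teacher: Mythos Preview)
Your proposal is correct and matches the paper's approach exactly: the paper does not prove this theorem but simply records it as a consequence of Cameron's Theorem~6.1 in \cite{MR0401885}, with pointers to \cite[\S3.4, (3.10)]{MR1066691} and \cite[Theorem~6.2.1]{MR2800979} for the packaging in terms of reducts and interdefinability. Your additional bookkeeping (passing to the canonical structure via Lemma~\ref{wlog-lemma} and Proposition~\ref{canonical-is-ultrahom}, and using Lemma~\ref{ultra-highhom} to translate high homogeneity into $k$-set-transitivity) is a reasonable unpacking of what the citation entails, but the paper itself offers nothing beyond the references.
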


Notice that these five structures all have trivial definable closure;
this will imply, in Lemma~\ref{highly-homog-meas-from-struct},
that the 
orbit
of each highly homogeneous structure 
admits a unique invariant measure.

\subsection{Borel $L$-structures and ergodic invariant measures}
\label{BorelL}

Aldous, Hoover, and Kallenberg have characterized ergodic invariant measures on $\Str_L$ 
in terms of a certain sampling procedure involving continuum-sized objects;
for 
details, see \cite{MR2426176} and \cite{MR2161313}.

We will obtain ergodic invariant measures via a special case of this procedure, by sampling from a particular kind of continuum-sized structure, called a \emph{Borel $L$-structure}.
For more on the connection between Borel $L$-structures and the Aldous--Hoover--Kallenberg representation, see \cite[\S6.1]{AFP}.

\begin{definition}[{\cite[Definition~3.1]{AFP}}]
\label{BorelLstructure}
Let $L$ be relational, and
let $\PP$ be an $L$-structure whose underlying set is the set $\Reals$ of real numbers.  We say
that $\PP$ is a \defn{Borel $L$-structure} if for all relation symbols $R \in L$,
the set
\[
\{\a \in \PP^j \st \PP\models R(\a)\}
\]
is a Borel subset of $\Reals^j$, where $j$ is the arity of $R$.
\end{definition}

The sampling procedure is given by
the following map $\FP$
that takes
each sequence of elements of $\PP$ to the corresponding structure with underlying 
set
$\Nats$.

\begin{definition}[{\cite[Definition~3.2]{AFP}}]
Let $L$ be relational and let $\PP$ be a Borel $L$-structure.
The map $\FP\colon \Reals^\omega \to \Str_L$ is defined as follows.
For $\t = (t_0, t_1, \ldots)\in\Reals^\omega$, let $\FP(\t)$ be the
$L$-structure with underlying set $\Nats$ satisfying
\[
\FP(\t) \, \models \,R(n_1, \dots, n_j) \quad \Leftrightarrow \quad 
\PP \models R(t_{n_1}, \dots, t_{n_j})
\]
for all $n_1, \dots, n_j \in \Naturals$
and for every relation symbol $R \in L$, and for which equality is inherited 
from $\Naturals$, i.e.,
\[
\FP(\t)
\, \models \,(m \neq  n)
\]
if and only if
$m$ and $n$ are distinct natural numbers.
\end{definition}
The map $\FP$ is Borel measurable \cite[Lemma~3.3]{AFP}.
Furthermore, $\FP$
is 
an $\sym$-map,
i.e.,
$\sigma \FP(\t)=\FP (\sigma \t)$
for every $\sigma\in\sym$ and $\t\in\Reals^\omega$.

The pushforward of $\FP$ gives rise to 
an ergodic
invariant measure, as we will see in Proposition~\ref{ergodic-measure}.

\begin{definition}[{\cite[Definition~3.4]{AFP}}]
\label{muPm-def}
Let $L$ be relational, let $\PP$ be a Borel $L$-structure,
and let $m$ be a probability measure on $\Reals$.
Define the measure $\mu_{(\PP,m)}$ on $\Str_L$ to be
\[
\mu_{(\PP, m)} \defas
 m^{\infty}\circ \FP^{-1}.
\]

Note that $m^\infty\bigl(\FP^{-1}(\Str_L)\bigr) =1$, and so
$\mu_{(\PP,m)}$ is a probability measure, namely the distribution of a random
element in $\Str_L$
induced via $\FP$ by an $m$-i.i.d.\ sequence on $\Reals$.
\end{definition}

By \cite[{Lemma~3.5}]{AFP},
$\mu_{(\PP, m)}$ is an invariant
measure on $\Str_L$.
In fact, $\mu_{(\PP, m)}$ is ergodic:
Aldous showed that 
it
is ergodic
for finite relational languages
\cite[Lemma~7.35]{MR2161313}; 
we
require the following extension
of this result to the setting of countable (possibly infinite) relational languages,
whose proof we include here for completeness.

\begin{proposition}
\label{ergodic-measure}
Let $L$ be relational, let
$\PP$ be a Borel $L$-structure, and let $m$ be
a 
probability measure on $\Reals$. 
Then the measure $\mu_{(\PP,m)}$ is ergodic. 
\end{proposition}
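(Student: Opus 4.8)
The plan is to reduce the infinite-language case to the finite-language case handled by Aldous, using a martingale/tail-field argument. First I would set up the relevant $\sigma$-algebras: enumerate the countably many relation symbols of $L$ as $R_0, R_1, \ldots$, and for each $n$ let $L_n = \{R_0, \ldots, R_{n-1}\}$, with $\PP_n$ the reduct of $\PP$ to $L_n$ (still a Borel $L_n$-structure) and $\mu_n = \mu_{(\PP_n, m)}$ the corresponding measure on $\Str_{L_n}$. The natural reduct map $\rho_n \colon \Str_L \to \Str_{L_n}$ is an $\sym$-map, and $\mu_{(\PP,m)} \circ \rho_n^{-1} = \mu_n$; moreover the $\sigma$-algebra $\mathcal{B}_n \defas \rho_n^{-1}(\text{Borel sets of }\Str_{L_n})$ is an increasing sequence whose union generates the Borel $\sigma$-algebra of $\Str_L$ (since the basic clopen sets $\llrr{R(n_1,\dots,n_j)}$ each live in some $\mathcal{B}_n$, and these generate).

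Next I would invoke Aldous's result \cite[Lemma~7.35]{MR2161313}: since each $L_n$ is finite relational, $\mu_n$ is ergodic with respect to the logic action of $\sym$ on $\Str_{L_n}$. Now let $B \subseteq \Str_L$ be Borel with $\mu_{(\PP,m)}(B \symdiff g^{-1}B) = 0$ for all $g \in \sym$; write $\mu = \mu_{(\PP,m)}$ and $f = \mathbf{1}_B \in L^1(\mu)$. The key step is to show $f$ is $\mu$-a.e.\ constant. Consider the conditional expectations $f_n \defas E_\mu[f \mid \mathcal{B}_n]$; by the martingale convergence theorem (Lévy's upward theorem), $f_n \to f$ in $L^1(\mu)$ and $\mu$-a.e., because $\bigcup_n \mathcal{B}_n$ generates the full $\sigma$-algebra. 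Each $f_n$, being $\mathcal{B}_n$-measurable, has the form $h_n \circ \rho_n$ for some Borel $h_n \colon \Str_{L_n} \to [0,1]$, and I would check that $h_n$ is almost $\sym$-invariant for $\mu_n$: for $g \in \sym$, the $\sym$-equivariance of $\rho_n$ together with the $\sym$-invariance of both $f$ (up to null sets) and $\mu$ gives $f_n \circ g^{-1} = E_\mu[f \circ g^{-1} \mid g^{-1}\mathcal{B}_n] = E_\mu[f \mid \mathcal{B}_n]$ $\mu$-a.e., since $g^{-1}\mathcal{B}_n = \mathcal{B}_n$; pushing forward along $\rho_n$ yields $\mu_n(\{h_n \neq h_n \circ g^{-1}\}) = 0$. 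By ergodicity of $\mu_n$, each $h_n$ is $\mu_n$-a.e.\ equal to a constant $c_n$, so $f_n = c_n$ $\mu$-a.e. Then $f = \lim_n f_n$ is $\mu$-a.e.\ constant, hence (being an indicator) $\mu(B) \in \{0,1\}$, establishing ergodicity.

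The main obstacle I anticipate is the almost-invariance transfer in the middle step: one must be careful that $g^{-1}\mathcal{B}_n = \mathcal{B}_n$ exactly (not just up to null sets) — this holds because $\rho_n$ is a genuine $\sym$-map and $g$ acts by a Borel automorphism of $\Str_{L_n}$, so the pullback $\sigma$-algebra is literally preserved — and that the identity $E_\mu[f \circ g^{-1} \mid \mathcal{B}_n] = (E_\mu[f \mid \mathcal{B}_n]) \circ g^{-1}$ is the standard equivariance of conditional expectation under a measure-preserving transformation. Combined with $\mu(B \symdiff g^{-1} B) = 0 \Rightarrow f \circ g^{-1} = f$ $\mu$-a.e., this gives $f_n \circ g^{-1} = f_n$ $\mu$-a.e. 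A secondary point requiring a line of justification is that $\bigcup_n \mathcal{B}_n$ generates the Borel $\sigma$-algebra of $\Str_L$, which follows since the subbasic sets $\llrr{R_i(n_1,\dots,n_j)}$ lie in $\mathcal{B}_{i+1}$ and, together with the equality sets, generate the topology of $\Str_L$. Everything else is a routine application of martingale convergence and the definition of ergodicity.
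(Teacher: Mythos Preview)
Your proof is correct, but it takes a genuinely different and more elaborate route than the paper's. The paper bypasses Aldous's finite-language result entirely: it observes that $m^\infty$ is ergodic on $\Reals^\omega$ by the Hewitt--Savage 0--1 law, and then transfers ergodicity along the $\sym$-equivariant map $\FP$ in a single step --- if $B\subseteq\Str_L$ is almost invariant for $\mu_{(\PP,m)}$, then $A\defas\FP^{-1}(B)$ is almost invariant for $m^\infty$ (since $\FP^{-1}(\sigma^{-1}B)=\sigma^{-1}\FP^{-1}(B)$), hence $m^\infty(A)\in\{0,1\}$, hence $\mu_{(\PP,m)}(B)\in\{0,1\}$. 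Your argument instead reduces to finite sublanguages $L_n$, cites \cite[Lemma~7.35]{MR2161313} for the ergodicity of each $\mu_n$, and then glues these together via L\'evy's upward theorem applied to the filtration $(\mathcal{B}_n)$. This works, and the approximation-by-sub-$\sigma$-algebras technique is a useful general pattern, but here it is considerably heavier machinery: you need the equivariance of conditional expectation, the exact invariance $g^{-1}\mathcal{B}_n=\mathcal{B}_n$, and the function form of ergodicity, whereas the paper's proof needs only Hewitt--Savage and the fact that $\FP$ is an $\sym$-map. In particular, the paper's argument is self-contained and does not rely on Aldous's lemma at all, while yours treats that lemma as a black box and then lifts it.
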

\begin{proof}
First note that the measure $m^\infty$ on $\Reals^\omega$ is ergodic by the Hewitt--Savage 0--1 law; for details, see \cite[Corollary~1.6]{MR2161313} and \cite[Theorem~3.15]{MR1876169}.

Write $\mu\defas \mu_{(\PP,m)}$.
Let $B\subseteq \Str_L$ be Borel and suppose that 
$\mu(B\vartriangle \sigma^{-1} B)=0$
for every $\sigma\in S_\infty$.
We will show that either $\mu(B)=0$ or $\mu(B)=1$.

Let $\t\in \Reals^\omega$ and $\sigma\in\sym$.
We have
\[\t\in \sigma^{-1} \FP^{-1}(B) \iff \FP(\sigma \t) \in B,\]
where $\sigma$ and $\sigma^{-1}$ act on $\Reals^\omega$,
and
\[\t\in \FP^{-1}( \sigma^{-1} B ) \iff \sigma \FP(\t)\in B,\]
where $\sigma$ and $\sigma^{-1}$ act on $\Str_L$ via the logic action.

Now, $\sigma \FP(\t)=\FP (\sigma \t)$, and so
 \[ \FP^{-1}(\sigma^{-1} B)=\sigma^{-1} \FP^{-1}(B) .\]
Using this fact, we have
\begin{eqnarray*}
0&=&\mu(B\vartriangle \sigma^{-1}B)\\
&=& m^\infty\bigl(\FP^{-1}(B\vartriangle \sigma^{-1} B)\bigr)\\
&=& m^\infty\bigl(\FP^{-1}(B)\vartriangle \FP^{-1}( \sigma^{-1} B)\bigr)\\
&=& m^\infty\bigl(\FP^{-1}(B)\vartriangle \sigma^{-1}\FP^{-1}(B)\bigr)\\
&=& m^\infty (A\vartriangle \sigma^{-1}A),
\end{eqnarray*}
where $A\defas \FP^{-1}(B)$.

Because $m^\infty$ is ergodic and 
$m^\infty(A\vartriangle \sigma^{-1} A)=0$
for every $\sigma\in S_\infty$, either
$m^\infty(A)=0 $ or $m^\infty(A)=1$ must hold. But then as $\mu(B)=m^\infty(A)$, 
either
$\mu(B)=0$ or $\mu(B)=1$, as desired.
\end{proof}

Not all ergodic invariant measures are of the form $\mu_{(\PP, m)}$: For example, it can be shown that the distribution of an \ER\ graph $G(\Nats, p)$ for $0<p<1$, each of
which is concentrated on the orbit of the Rado graph, is not of this form.
However,
Petrov and Vershik \cite{MR2724668} have shown that the orbit of the Rado graph admits an invariant measure of the form $\mu_{(\PP, m)}$ (in our terminology).
More generally,
the proof of \cite[Corollary~6.1]{AFP} shows that 
whenever
an orbit
admits an invariant measure, it admits one of 
the form $\mu_{(\PP, m)}$.
Note that this class of invariant measures also occurs elsewhere; see
Kallenberg's
 notion of \emph{simple arrays} \cite{MR1702867} and,
in the case of graphs, the notions of \emph{random-free graphons}
\cite[\S10]{MR3043217} or $0$--$1$ \emph{valued graphons} \cite{MR2815610}.

\subsection{Strong witnessing and the existence of invariant measures}

We now consider how to obtain ergodic invariant measures concentrated on a particular orbit.
We will do so by obtaining ergodic invariant measures concentrated on the class of models in $\Str_L$ of a particular \Fr\ theory $T$, 
where this class is the desired orbit.

A measure $m$ on $\Reals$ is said to be \defn{nondegenerate}
when every
nonempty open set has positive measure, and \defn{continuous} when it
assigns measure zero to every singleton.

\begin{definition}[{\cite[Definition~3.8]{AFP}}]
\label{Twitness}
Let $\PP$ be a Borel $L$-structure and let $m$ be a probability measure on
$\Reals$.  Suppose $T$ is a countable pithy $\Pi_2$ theory of $\Lwow(L)$.
We say that the pair $(\PP, m)$ \defn{witnesses $T$} if for every sentence
$(\forall \x)(\exists y)\psi(\x,y) \in T$, and for every tuple $\a \in
\PP$ such that $|\a| = |\x|$,  we have either
\begin{itemize}
\item[\emph{(i)}]
\quad $\PP \models \psi(\a,b)$ for some $b\in\a$, or
\vspace{5pt}
\item[\emph{(ii)}]
\quad $m\bigl(\{ b \in \PP \st \PP \models \psi(\a,b)\}\bigr ) > 0$.
\end{itemize}
We say that \defn{$\PP$ strongly witnesses $T$} when, for every
nondegenerate continuous probability measure $m$ on $\Reals$, the pair $(\PP,m)$
witnesses $T$.
\end{definition}

\begin{proposition}[{\cite[Theorem~3.10]{AFP}}]
\label{BorelLStructuresWitnessingTLeadToInvariantMeasures}
Let $L$ be relational,
let $T$ be a countable pithy $\Pi_2$ theory of $\Lwow(L)$, and let $\PP$ be
a Borel $L$-structure.  Suppose $m$ is a continuous probability measure on
$\Reals$ such that $(\PP,m)$ witnesses $T$.  Then $\mu_{(\PP,m)}$ is
concentrated on the set of structures in $\Str_L$ that are models of $T$.
\end{proposition}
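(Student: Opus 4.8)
The plan is to unwind the definition $\mu_{(\PP,m)} = m^\infty\circ\FP^{-1}$ and prove the equivalent statement that $m^\infty\bigl(\{\t\in\Reals^\omega \st \FP(\t)\models T\}\bigr) = 1$; since the set of structures in $\Str_L$ that model $T$ is Borel, being a countable intersection of sets of the form $\llrr{\varphi}$ over the sentences $\varphi$ of $T$, this suffices. As $T$ is countable, it is enough to fix a single sentence $\varphi = (\forall\x)(\exists y)\psi(\x,y)\in T$ and show that $m^\infty$-almost every $\t$ satisfies $\FP(\t)\models\varphi$; and since there are only countably many tuples $\bar n$ of natural numbers of length $j \defas |\x|$, it is enough to fix one such $\bar n = (n_1,\dots,n_j)$ and show that for $m^\infty$-almost every $\t$ there is some $k\in\Nats$ with $\FP(\t)\models\psi(\bar n, k)$.

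First I would dispose of the equality bookkeeping. Let $N\subseteq\Reals^\omega$ be the set of sequences with a repeated coordinate; since $m$ is continuous, $m^\infty(N) = 0$. For $\t\notin N$, an induction on quantifier-free $\Lwow(L)$-formulas —- the relational atomic case being immediate from the definition of $\FP$, the equality case from the absence of repeats in $\t$, and countable Boolean combinations being harmless —- shows that $\FP(\t)\models\psi(\bar n,k)$ if and only if $\PP\models\psi(t_{n_1},\dots,t_{n_j},t_k)$, for every $k$. Hence it remains to show that $m^\infty$-almost every $\t$ lies in
\[
A_{\bar n} \defas \{\t\in\Reals^\omega \st \PP\models\psi(t_{n_1},\dots,t_{n_j},t_k) \text{ for some } k\in\Nats\}.
\]

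Now I would bring in the witnessing hypothesis together with a conditioning argument. Let $F = \{n_1,\dots,n_j\}$, and write $m^\infty = m^F\otimes m^{\Nats\setminus F}$; the tuple $\a(\t)\defas(t_{n_1},\dots,t_{n_j})$ depends only on the coordinates of $\t$ in $F$, so fix their values, making $\a$ determined (and of length $j = |\x|$). Since $(\PP,m)$ witnesses $T$, either (i) $\PP\models\psi(\a,b)$ for some coordinate $b$ of $\a$, in which case every $\t$ extending these coordinates lies in $A_{\bar n}$, taking $k$ to be the corresponding index $n_i$; or (ii) the set $W\defas\{b\in\Reals \st \PP\models\psi(\a,b)\}$, which is Borel because $\PP$ is a Borel $L$-structure and $\psi(\a,y)$ is a countable Boolean combination of atomic formulas in $y$, has $m(W) > 0$. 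In case (ii), the coordinates $(t_k)_{k\notin F}$ are i.i.d.\ with law $m$, so the probability that none lands in $W$ is $\lim_{n\to\infty}(1-m(W))^n = 0$; hence, conditionally on the $F$-coordinates, $\t\in A_{\bar n}$ almost surely. In either case the conditional probability of $A_{\bar n}$ given the $F$-coordinates is $1$, so by Fubini $m^\infty(A_{\bar n}) = 1$. Intersecting these full-measure sets over the countably many $\bar n$, then over the countably many $\varphi\in T$, and deleting the null set $N$ (on whose complement the translation above is valid), produces a full-measure set of $\t$ with $\FP(\t)\models T$, which is what we need.

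I expect the step requiring the most care to be the interplay between alternatives (i) and (ii) in the definition of witnessing: which one holds depends on $\a$, hence on $\t$, so the argument must first be localized to the finitely many coordinates of $\t$ that determine $\a$ before the direct independence computation (a form of the second Borel--Cantelli lemma) is applied to the remaining coordinates. The repeated-coordinate issue is a minor technicality handled by continuity of $m$, and the measurability of $W$ follows at once from the definition of a Borel $L$-structure; note that nondegeneracy of $m$ is not needed here, only continuity.
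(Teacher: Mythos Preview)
The paper does not supply its own proof of this proposition; it is quoted from \cite[Theorem~3.10]{AFP} and stated without argument. Your proof is correct and is precisely the natural one: reduce to a single sentence $(\forall\x)(\exists y)\psi\in T$ and a single tuple $\bar n$, translate satisfaction of the quantifier-free $\psi$ in $\FP(\t)$ into satisfaction in $\PP$ (valid off the $m^\infty$-null set of sequences with a repeated coordinate, using continuity of $m$), condition on the finitely many coordinates indexed by $F$, and then invoke the witnessing alternative --- either a witness already sits among the coordinates of $\a$, or the remaining i.i.d.\ coordinates hit the positive-measure Borel set $W$ almost surely. The measurability checks and the Fubini step are handled correctly, and you are right that nondegeneracy of $m$ plays no role here.
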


The main theorem of \cite{AFP} states
that a countably infinite structure in a countable language admits at least one invariant measure if and only if it has trivial definable closure. 
The easier direction is stated in Theorem~\ref{negative-AFP} above. Proposition~\ref{AFP-Borel-L}, 
which we will need in the proof of Theorem~\ref{simpler-theorem},
is the key result used in Theorem~\ref{positive-AFP}, essentially the harder direction of \cite{AFP}.

\begin{proposition}[{\cite[Theorem~3.19 and Lemma~3.20]{AFP}}]
\label{AFP-Borel-L}
Let $L$ be relational and let $\M\in\Str_L$ be ultrahomogeneous.
If $\M$ 
has trivial definable closure, then there is a Borel $L$-structure $\PP$ that strongly witnesses the \Fr\ theory of $\M$.
\end{proposition}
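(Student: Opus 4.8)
The plan is to separate a combinatorial statement about the age of $\M$ from the analytic construction of the Borel structure. Since $L$ is relational and $\M$ is countable and ultrahomogeneous, $\mathcal{K}:=\Age(\M)$ is a \Fr\ class with \Fr\ limit $\M$, and by Proposition~\ref{pithyconseq} we fix the pithy $\Pi_2$ \Fr\ theory $T$ of $\M$. Recall that in the construction behind Proposition~\ref{pithyconseq} this theory consists of \emph{one-point extension axioms} $(\forall\bar x)(\exists y)\bigl(\mathrm{diag}_A(\bar x)\rightarrow\mathrm{diag}_B(\bar x,y)\bigr)$, one for each one-point extension $A\subseteq B$ in $\mathcal{K}$, together with \emph{omitting axioms} $(\forall\bar x)(\exists y)\,\neg\mathrm{diag}_C(\bar x)$ for the finite $L$-structures $C\notin\mathcal{K}$. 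The first step is to deduce from trivial definable closure of $\M$ that $\mathcal{K}$ has the strong amalgamation property. It is standard that a relational \Fr\ class has strong amalgamation precisely when, in its limit, the pointwise stabilizer of every finite tuple has no finite orbit other than the singletons of that tuple's entries; so it suffices to rule out any other finite orbit. Suppose $\{b_1,\dots,b_n\}$ were such an orbit of the pointwise stabilizer of a finite tuple $\bar a$ in $\Aut(\M)$, with $b_1$ not an entry of $\bar a$. Put $\bar a':=\bar a\,b_2\cdots b_n$: every automorphism fixing $\bar a'$ pointwise fixes $\bar a$, hence permutes $\{b_1,\dots,b_n\}$, and also fixes $b_2,\dots,b_n$, so it fixes $b_1$; thus $b_1\in\dcl_\M(\bar a')$ while $b_1$ is not an entry of $\bar a'$, contradicting trivial dcl. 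Iterating strong amalgamation then gives what the construction really needs: to any member of $\mathcal{K}$ one may adjoin arbitrarily many new points, each realizing a prescribed one-point extension over a prescribed already-present subset, and finitely many such adjunctions over various subsets can be carried out together while staying in $\mathcal{K}$; this is what will reconcile competing demands below.

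Second, call a Borel $L$-structure $\PP$ on $\Reals$ (in the sense of Definition~\ref{BorelLstructure}) \emph{rich} if $\Age(\PP)\subseteq\mathcal{K}$ and, for every finite tuple $\bar r$ from $\PP$ with $A:=\langle\bar r\rangle\in\mathcal{K}$ and every one-point extension $A\subseteq B$ in $\mathcal{K}$, the set $\{\,s\in\Reals:\langle\bar r,s\rangle\cong_A B\,\}$ has nonempty interior. A rich $\PP$ strongly witnesses $T$, by a short check: fix a nondegenerate continuous measure $m$, an axiom of $T$, and a tuple $\bar a$ from $\PP$ with $|\bar a|=|\bar x|$. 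For an omitting axiom, $\Age(\PP)\subseteq\mathcal{K}$ and $C\notin\mathcal{K}$ force $\PP\models\neg\mathrm{diag}_C(\bar a)$, so clause~(i) of Definition~\ref{Twitness} holds. For a one-point extension axiom attached to $A\subseteq B$: if $\langle\bar a\rangle\not\cong A$ then $\mathrm{diag}_A(\bar a)$ fails and clause~(i) again holds; if $\langle\bar a\rangle\cong A$ then $\langle\bar a\rangle\in\mathcal{K}$, so richness supplies a nonempty open $U\subseteq\Reals$ with $\PP\models\mathrm{diag}_B(\bar a,s)$ for all $s\in U$, and $m(U)>0$ since $m$ is nondegenerate, giving clause~(ii). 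Hence $(\PP,m)$ witnesses $T$; as $m$ was arbitrary, $\PP$ strongly witnesses $T$. (Tuples $\bar a$ with a repeated entry are covered too, the relevant $\mathrm{diag}$-hypothesis then being automatically false.)

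It remains to construct a rich Borel $L$-structure $\PP$, and this is the substantive step; it generalizes Petrov and Vershik's construction of a ``universal graphon'' for the Rado graph \cite{MR2724668} to an arbitrary relational \Fr\ class with strong amalgamation. After a Borel identification of $\Reals$ with a suitable space of codes, one defines the atomic diagram of $\PP$ by a recursion of length $\omega$ enumerating ``richness requirements'': each requirement fixes a finite product of basic open boxes together with a target one-point extension, and is discharged by carving out a fresh open box inside that product and prescribing, on it, the relations joining the new coordinate to the others so as to realize the extension. When two requirements act on overlapping regions, the iterated strong amalgamation from the first step lets their demands be realized simultaneously inside a single member of $\mathcal{K}$, so no stage ever forces a finite substructure of $\PP$ out of $\mathcal{K}$; and since only countably many Borel modifications are performed, every relation of $\PP$ is Borel in the end. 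I expect this step to be the main obstacle: the requirements and boxes must be organized so that, in the limit, \emph{every} finite tuple of reals --- not merely a countable dense family --- acquires all of its required one-point extensions on \emph{open} sets of witnesses (openness, as opposed to merely positive Lebesgue measure, being exactly what makes clause~(ii) survive passage to an arbitrary nondegenerate continuous measure), all while $\PP$ stays Borel and $\Age(\PP)\subseteq\mathcal{K}$ is preserved at every stage. Granting such a $\PP$, the second step shows it strongly witnesses the \Fr\ theory of $\M$, which is the assertion of the proposition.
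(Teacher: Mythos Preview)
The paper does not prove this proposition at all: it is stated with a citation to \cite[Theorem~3.19 and Lemma~3.20]{AFP} and used as a black box, so there is no ``paper's own proof'' to compare against. What you have written is therefore not a comparison target but a sketch of the argument from the cited reference.

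Your decomposition into three steps is in fact the shape of the argument in \cite{AFP}: your Step~1 (trivial dcl $\Rightarrow$ strong amalgamation) is their Lemma~3.20, your Step~2 (a suitable largeness condition on the set of one-point-extension witnesses implies strong witnessing) corresponds to how strong witnessing is verified there, and your Step~3 (the recursive construction of the Borel $L$-structure) is their Theorem~3.19. Steps~1 and~2 as you have written them are correct.

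The genuine gap is Step~3, and you have flagged it yourself. You describe a recursion that carves out open boxes to discharge countably many richness requirements and invokes strong amalgamation to reconcile overlaps, but you do not actually carry it out, and you explicitly write ``I expect this step to be the main obstacle.'' This is not a minor detail to be filled in: it is essentially the entire content of the proposition. The difficulty you identify --- arranging that \emph{every} finite tuple of reals, not merely those from a countable dense set, has open sets of witnesses for every one-point extension, while keeping $\Age(\PP)\subseteq\mathcal{K}$ and all relations Borel --- is real, and the bookkeeping required (in \cite{AFP} it is done via a careful enumeration of ``tasks'' over a tree of intervals, with the induction hypothesis tracking which quantifier-free types are realized where) is substantial. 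As written, your proposal is an accurate table of contents for the proof, with the main chapter missing.
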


\begin{theorem}[{\cite[Theorem~3.21]{AFP}}]
\label{positive-AFP}
Let $L$ be relational and let $\M\in\Str_L$ be ultrahomogeneous.
If $\M$ 
has trivial definable closure, then  $\M$ admits an invariant measure.
\end{theorem}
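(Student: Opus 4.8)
The plan is to prove Theorem~\ref{positive-AFP} by assembling the machinery developed in this section. Given an ultrahomogeneous relational $\M$ with trivial definable closure, the strategy is: (1) extract a suitable pithy $\Pi_2$ axiomatization of $\M$; (2) produce a Borel $L$-structure witnessing it; (3) sample from that structure to get an invariant measure; (4) check the measure lands on the right orbit.

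First I would invoke Proposition~\ref{pithyconseq} to obtain the \Fr\ theory $T$ of $\M$, a countable $\Lwow(L)$-theory all of whose sentences are pithy $\Pi_2$, and all of whose countable models are isomorphic to $\M$. Next, since $\M$ has trivial definable closure, Proposition~\ref{AFP-Borel-L} yields a Borel $L$-structure $\PP$ that strongly witnesses $T$; by definition this means that for \emph{every} nondegenerate continuous probability measure $m$ on $\Reals$ the pair $(\PP,m)$ witnesses $T$ in the sense of Definition~\ref{Twitness}. Now fix any such $m$ — for instance the uniform measure on $[0,1]$, which is both nondegenerate and continuous — so that $(\PP,m)$ witnesses $T$.

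With $(\PP,m)$ in hand, Proposition~\ref{BorelLStructuresWitnessingTLeadToInvariantMeasures} tells us that the measure $\mu_{(\PP,m)}$ is concentrated on the set of structures in $\Str_L$ that are models of $T$; and by \cite[Lemma~3.5]{AFP} together with Proposition~\ref{ergodic-measure}, $\mu_{(\PP,m)}$ is an ergodic invariant measure on $\Str_L$. It remains only to observe that, because every countable model of $T$ is isomorphic to $\M$, the set of models of $T$ in $\Str_L$ is exactly the isomorphism class of $\M$, i.e., its orbit under the logic action. Hence $\mu_{(\PP,m)}$ is an invariant measure concentrated on the orbit of $\M$, so $\M$ admits an invariant measure.

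I do not expect a genuine obstacle here: the theorem is essentially a bookkeeping corollary of the results quoted earlier in the excerpt, with all the substantive content already packaged into Propositions~\ref{AFP-Borel-L} and \ref{BorelLStructuresWitnessingTLeadToInvariantMeasures} (and, behind them, the work of \cite{AFP}). The only point requiring minor care is the last step — confirming that ``concentrated on the models of $T$'' coincides with ``concentrated on the orbit of $\M$'' — which follows immediately from the defining property of the \Fr\ theory $T$ that all of its countable models are isomorphic to $\M$, combined with the fact (noted after Definition~\ref{widehat-def} and in Section~\ref{prelim-sec}) that the orbit of $\M$ in $\Str_L$ is precisely its isomorphism class there.
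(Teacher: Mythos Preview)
Your approach is essentially identical to the paper's: both invoke Proposition~\ref{AFP-Borel-L} to obtain a Borel $L$-structure $\PP$ strongly witnessing the \Fr\ theory of $\M$, pick a nondegenerate continuous measure $m$, and apply Proposition~\ref{BorelLStructuresWitnessingTLeadToInvariantMeasures} to conclude that $\mu_{(\PP,m)}$ is concentrated on the orbit of $\M$. Your extra citations (Proposition~\ref{pithyconseq}, Proposition~\ref{ergodic-measure}) are harmless elaborations.

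One small slip: the uniform measure on $[0,1]$ is \emph{not} nondegenerate in the sense defined just before Definition~\ref{Twitness} --- that definition requires every nonempty open subset of $\Reals$ to have positive measure, which fails for any open set disjoint from $[0,1]$. The paper uses a Gaussian for exactly this reason. Replace your example with any measure whose support is all of $\Reals$ (e.g., a standard normal) and the argument goes through unchanged.
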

\begin{proof}
There is 
a Borel $L$-structure $\PP$ 
that strongly
witnesses the \Fr\ theory of $\M$,
by Proposition~\ref{AFP-Borel-L}.
Let $m$ be 
any nondegenerate  continuous probability measure $m$ on $\Reals$ (e.g., a Gaussian).
Then
by Proposition~\ref{BorelLStructuresWitnessingTLeadToInvariantMeasures},
the invariant measure $\mu_{(\PP, m)}$ is 
concentrated on 
the set of
models of the \Fr\ theory of $\M$ in $\Str_L$.
In particular, 
$\mu_{(\PP, m)}$ is 
concentrated on the
orbit
of $\M$.
\end{proof}

Finally, we establish a lemma about measures of the form $\mu_{(\PP, m)}$.
Recall the notation $\llrr{\varphi}$ from Definition~\ref{widehat-def}.



\begin{lemma}
\label{mu-vs-m-n}
Let $L$ be relational, let $\M\in\Str_L$ be ultrahomogeneous, and 
let
$T$ 
be the \Fr\ theory of $\M$.
Suppose that $\PP$ is a Borel $L$-structure
that strongly
witnesses $T$.
Let $m$  be a
nondegenerate 
continuous probability measure on $\Reals$. 
Then for every $n\in\Nats$  and  every $\Lwow(L)$-formula $\varphi$ having $n$ free variables,
\[
\mu_{(\PP, m)}
\bigl(
\llrr{\varphi(0, \ldots, n-1)}
\bigr)
 = m^n\bigl(\{ \a\in\Reals^n \st \PP \models \varphi(\a)\}\bigr).
\]
\end{lemma}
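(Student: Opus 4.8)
The plan is to compute $\mu_{(\PP,m)}\bigl(\llrr{\varphi(0,\dots,n-1)}\bigr)$ by unwinding the definition of the pushforward measure $\mu_{(\PP,m)} = m^\infty \circ \FP^{-1}$ and then using the fact that the coordinates $t_0,\dots,t_{n-1}$ of a random sequence $\t = (t_0,t_1,\dots)\in\Reals^\omega$ are $m$-i.i.d. First I would reduce, by induction on the complexity of $\varphi$ (or by a standard $\pi$--$\lambda$ / monotone class argument on the Borel hierarchy of $\Lwow(L)$-formulas), to the case where $\varphi$ is atomic, since both sides of the claimed identity are (countably additive, complement-respecting) set functions in $\varphi$ and the class of $\varphi$ for which the identity holds is therefore closed under the infinitary Boolean operations available in $\Lwow(L)$; the base case of atomic $\varphi = R(x_{i_1},\dots,x_{i_k})$ with $i_1,\dots,i_k < n$ is exactly the defining clause of $\FP$, namely $\FP(\t)\models R(0,\dots) \iff \PP \models R(t_0,\dots)$, together with the clause for equality.

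Concretely: for atomic $\varphi$ we have
\[
\FP^{-1}\bigl(\llrr{\varphi(0,\dots,n-1)}\bigr)
= \{\t\in\Reals^\omega \st \PP\models\varphi(t_0,\dots,t_{n-1})\}
= \pi_n^{-1}\bigl(\{\a\in\Reals^n \st \PP\models\varphi(\a)\}\bigr),
\]
where $\pi_n\colon\Reals^\omega\to\Reals^n$ is projection onto the first $n$ coordinates; the set $\{\a \st \PP\models\varphi(\a)\}$ is Borel since $\PP$ is a Borel $L$-structure, so applying $m^\infty$ and using $m^\infty\circ\pi_n^{-1} = m^n$ gives the identity. I would note that the preimage under $\FP$ of an infinitary conjunction (resp.\ disjunction) is the corresponding intersection (resp.\ union) of preimages, and likewise $\FP^{-1}$ commutes with complementation relative to $\Str_L$, so the inductive step is immediate; the mild subtlety is that $\varphi$ may have more than $n$ quantified or bound variables internally, but since $\varphi$ is in $\Lwow$ it has finitely many quantifiers, and $\FP$ faithfully transports the satisfaction relation $\PP\models\varphi(t_0,\dots,t_{n-1})$ to $\FP(\t)\models\varphi(0,\dots,n-1)$ — this is essentially the content of $\FP$ being an $\sym$-equivariant ``sampling'' map, and is where one must be slightly careful about how quantifiers in $\M$ versus in $\Str_L$ interact.

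The main obstacle, such as it is, is precisely this last point: verifying that $\FP(\t)\models\varphi(0,\dots,n-1) \iff \PP\models\varphi(t_0,\dots,t_{n-1})$ for \emph{all} $\Lwow(L)$-formulas $\varphi$, not just atomic ones. This requires checking that quantification behaves correctly, i.e.\ that $\FP(\t)\models(\exists y)\psi(0,\dots,n-1,y)$ iff there is $k\in\Nats$ with $\FP(\t)\models\psi(0,\dots,n-1,k)$ iff there is $k$ with $\PP\models\psi(t_0,\dots,t_{n-1},t_k)$ — but this last ``there is $k$'' quantifies only over $\{t_k : k\in\Nats\}$, a countable subset of $\Reals$, rather than over all of $\Reals$. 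However, because $\PP$ strongly witnesses the pithy $\Pi_2$ theory $T$ of $\M$ and $m$ is nondegenerate and continuous, one can invoke Proposition~\ref{BorelLStructuresWitnessingTLeadToInvariantMeasures}: $\mu_{(\PP,m)}$ is concentrated on models of $T$, which are all isomorphic to $\M$, so $m^\infty$-almost every $\t$ has $\FP(\t)\cong\M$, and for such $\t$ the substructure of $\PP$ on $\{t_k : k\in\Nats\}$ is elementarily equivalent (indeed $\Lwow$-equivalent, being isomorphic to $\M$) to all of $\PP$ in the relevant sense, so the discrepancy between ``$\exists k$'' and ``$\exists$ real'' vanishes on a set of full measure. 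Thus the equality of the two displayed quantities holds, completing the proof; I would organize the write-up as: (1) the atomic/equality base case via the definition of $\FP$; (2) the Boolean/infinitary induction step via $\FP^{-1}$ commuting with the operations; (3) the quantifier step, handled modulo the measure-one set on which $\FP(\t)\cong\M$, using strong witnessing.
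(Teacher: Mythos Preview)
Your overall strategy matches the paper's: both come down to showing that for $m^\infty$-almost every $\t$, the map $k\mapsto t_k$ is an $\Lwow(L)$-elementary embedding of $\FP(\t)$ into $\PP$, so that $\FP(\t)\models\varphi(0,\dots,n-1)$ iff $\PP\models\varphi(t_0,\dots,t_{n-1})$ for every $\Lwow(L)$-formula $\varphi$; the stated identity then follows by taking measures and projecting to $m^n$. Your quantifier-free base case is fine. The gap is in your justification of the quantifier step. You argue that the substructure on $\{t_k\}$, being isomorphic to $\M$, is ``$\Lwow$-equivalent\ldots to all of $\PP$ in the relevant sense,'' but this does not follow. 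First, elementary equivalence (agreement on sentences) is strictly weaker than being an elementary substructure (agreement on formulas with parameters from the substructure), and it is the latter you need. Second, $\PP\models T$ only says that $\PP$ satisfies a pithy $\Pi_2$ theory whose \emph{countable} models are isomorphic to $\M$; this does not by itself force the uncountable structure $\PP$ to be $\Lwow$-equivalent to $\M$. Crucially, you never invoke the hypothesis that $\M$ is ultrahomogeneous.

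The paper closes this gap via quantifier elimination. Because $\M$ is ultrahomogeneous and $L$ is relational, every $\Lwow(L)$-formula $\varphi(\x)$ is equivalent in $\M$ to a quantifier-free $\psi(\x)$; and since $\PP$ strongly witnesses $T$, one has $\PP\models T$ (by \cite[Lemma~3.9]{AFP}), so the equivalence $(\forall\x)\bigl(\varphi(\x)\leftrightarrow\psi(\x)\bigr)$ holds in $\PP$ as well. Quantifier-free formulas are absolute between a substructure and the ambient structure, so on the measure-one set where $\FP(\t)\cong\M$ the embedding is $\Lwow$-elementary. With this in hand, your separate inductive treatment of Boolean connectives and quantifiers becomes unnecessary: quantifier elimination reduces everything to the quantifier-free case in one stroke.
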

\begin{proof}
By \cite[{Lemma~3.6}]{AFP},
because $m$ is continuous, $\mu_{(\PP, m)}$ is 
concentrated on the isomorphism classes 
in $\Str_L$
of countably infinite substructures of $\PP$.

Because $\M$ is ultrahomogeneous, for every $\Lwow(L)$-formula $\varphi(\x)$ there is some quantifier-free $\psi(\x)$ such that 
\[
\M \models (\forall \x) \bigl ( \varphi(\x) \leftrightarrow \psi(\x) \bigr).
\]
Because $\PP$ strongly witnesses $T$, by \cite[{Lemma~3.9}]{AFP} we have that
$\PP \models T$. Hence
\[
\PP \models (\forall \x) \bigl ( \varphi(\x) \leftrightarrow \psi(\x) \bigr).
\]
In particular, if 
a sequence of reals
determines a substructure of $\PP$ that is isomorphic to $\M$, then this substructure is in fact $\Lwow(L)$-elementary.

Therefore, as $\PP$ strongly witnesses $T$, 
by Proposition~\ref{BorelLStructuresWitnessingTLeadToInvariantMeasures} and Definition~\ref{muPm-def}
the probability measure $m^\infty$ concentrates on sequences of reals that determine
elementary substructures of $\PP$.
Hence the probability that a structure sampled according to 
$\mu_{(\PP, m)}$
satisfies $\varphi(0, \ldots, n-1)$ is 
equal to
\[
m^n\bigl(\{ \a\in\Reals^n \st \PP \models \varphi(\a)\}\bigr),
\]
as desired.
\end{proof}

\section{The number of ergodic invariant measures}
\label{uniqueness-for-hh-section}

In this section we prove our main result, Theorem~\ref{simpler-theorem}.

\subsection{Unique invariant measures}
We now 
show that every ultrahomogeneous highly homogeneous structure admits a unique invariant measure.
Recall Cameron's result, Theorem~\ref{highhom-perm-thm}, that the highly homogeneous structures are (up to interdefinability) precisely the five reducts of the rational linear order $(\Rationals, <)$.

\begin{lemma}
\label{highly-homog-meas-from-struct}
Let $L$ be relational and let $\M\in\Str_L$ be ultrahomogeneous.
If $\M$ is
highly homogeneous, then there is an
invariant measure on the isomorphism class of $\M$ in 
$\Str_L$.
\end{lemma}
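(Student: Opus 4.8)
The plan is to combine Cameron's classification (Theorem~\ref{highhom-perm-thm}) with the machinery of strong witnessing developed in the preliminaries. Since $\M$ is ultrahomogeneous and highly homogeneous, all $k$-element substructures of $\M$ are isomorphic for each $k$, by Lemma~\ref{ultra-highhom}. Now, the five structures in Cameron's list all have trivial definable closure, as observed just after Theorem~\ref{highhom-perm-thm}; since having trivial definable closure is an interdefinability invariant (Lemma~\ref{trivialdcl-interdef}, Corollary~\ref{trivialdcl-canonical}), $\M$ has trivial definable closure as well. Hence Proposition~\ref{AFP-Borel-L} applies: there is a Borel $L$-structure $\PP$ that strongly witnesses the \Fr\ theory $T$ of $\M$.

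With $\PP$ in hand, I would simply quote Theorem~\ref{positive-AFP} (or rerun its short proof): pick any nondegenerate continuous probability measure $m$ on $\Reals$, e.g.\ a Gaussian. By Proposition~\ref{BorelLStructuresWitnessingTLeadToInvariantMeasures}, $\mu_{(\PP, m)}$ is concentrated on the set of models of $T$ in $\Str_L$, which — since $T$ is the \Fr\ theory of $\M$ — is exactly the isomorphism class of $\M$ in $\Str_L$. Therefore $\mu_{(\PP, m)}$ is an invariant measure on the isomorphism class of $\M$, which is what the lemma asserts.

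In fact, for this particular lemma (existence, not uniqueness) one does not even need Cameron's theorem: high homogeneity of an ultrahomogeneous $\M$ already forces trivial definable closure directly, since any automorphism moving a single element $a$ outside the finite tuple $\b$ it fixes is produced by high homogeneity applied to $\b \cup \{a\}$ and a suitable $(|\b|+1)$-element set — so one can cite Proposition~\ref{AFP-Borel-L} and Theorem~\ref{positive-AFP} immediately. Either route is short; the substantive content has already been packaged into the cited results.

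The main (minor) obstacle is essentially bookkeeping: making sure the hypotheses of Proposition~\ref{AFP-Borel-L} and Proposition~\ref{BorelLStructuresWitnessingTLeadToInvariantMeasures} are literally met — namely that $L$ is relational (given), that $\M$ is ultrahomogeneous (given), and that $\M$ has trivial definable closure (the one thing to verify, via high homogeneity or via Cameron plus interdefinability invariance). There is no genuine difficulty here; the real work — the uniqueness half of case~($1$) and the dichotomy of case~($2^{\aleph_0}$) — lies in the lemmas that follow this one.
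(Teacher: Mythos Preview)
Your main argument is correct and is exactly the paper's proof: Cameron's theorem identifies $\M$ (up to interdefinability) with one of the five reducts of $(\Rationals,<)$, each of which has trivial definable closure; this passes to $\M$ via Lemma~\ref{trivialdcl-interdef}, and then Theorem~\ref{positive-AFP} gives the invariant measure. (Your detour through Proposition~\ref{AFP-Borel-L} and Proposition~\ref{BorelLStructuresWitnessingTLeadToInvariantMeasures} just unpacks Theorem~\ref{positive-AFP}; the initial appeal to Lemma~\ref{ultra-highhom} is unused.)

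One caution about your aside: the ``direct'' route you sketch does not quite work as stated. High homogeneity applied to $\b\cup\{a\}$ and another $(|\b|+1)$-set only yields an automorphism carrying one set onto the other \emph{setwise}; it need not fix $\b$ pointwise, which is what trivial definable closure requires. Extracting a pointwise stabilizer that moves $a$ from this setwise statement takes additional argument, so the Cameron route (which both you and the paper use) is the clean one here.
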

\begin{proof}
We can check directly that each reduct of $(\Rationals, <)$
has trivial definable closure.
By Theorem~\ref{highhom-perm-thm} and the hypothesis that $\M$ is highly homogeneous,
$\M$ is interdefinable with one of these five. 
Hence $\M$ also has trivial definable closure
by Lemma~\ref{trivialdcl-interdef}.
Therefore by Theorem~\ref{positive-AFP}, there is an invariant measure 
on the isomorphism class of $\M$ in $\Str_L$.
\end{proof}

Alternatively, 
instead of applying 
Theorem~\ref{positive-AFP},
there are several more direct ways of constructing an invariant measure on each of the five 
reducts of $(\Rationals, <)$.
We sketched the construction of the Glasner--Weiss measure on $(\Rationals, <)$ in 
\S\ref{background-subsection},
as the weak limit of the uniform measures on $n$-element linear orders; each of the other four also arises as the weak limit of uniform measures.

Another way to construct 
the Glasner--Weiss
measure is
as the 
ordering on the set $\Nats$ of indices induced by
an $m$-i.i.d.\ sequence of reals, where $m$ is any nondegenerate continuous probability measure on $\Reals$.  
The
invariant measures on the remaining four 
reducts may be obtained in a similar way from
an i.i.d.\ sequence on
the respective reduct of $\Reals$.

For example, for the 
countable dense circular order,
the (unique) invariant measure can be obtained
as either the weak limit of the uniform measure on circular orders of size $n$ with the (ternary) clockwise-order relation,
or from the 
ternary relation induced on the set $\Nats$ of indices by the
clockwise-ordering of an $m$-i.i.d.\ sequence, where $m$ is a nondegenerate continuous probability measure on the unit circle.

Note
that the existence of an invariant measure on the orbit of each ultrahomogeneous highly homogeneous structure $\M$ is a consequence of Exercise~5 of \cite[\S4.10]{MR1066691}; this exercise implies that the weak limit of uniform measures on $n$-element substructures of $\M$ is invariant and concentrated on the 
orbit
of $\M$.

After the following lemma, we will be able
to prove that every ultrahomogeneous highly homogeneous structure admits a unique invariant measure.
Write $S_n$ to denote the group of permutations of $\{0, 1, \ldots, n-1\}$.

\begin{lemma}
\label{at-most-one}
Let $\M \in \Str_L$.
If $\M$ is
highly homogeneous,
then there is at most
one 
invariant measure 
on the isomorphism class of $\M$ in $\Str_L$.
\end{lemma}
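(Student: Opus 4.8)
The plan is to show that a highly homogeneous structure $\M$ imposes enough constraints via $\sym$-invariance to pin down all finite-dimensional marginals of any invariant measure concentrated on its orbit, and then invoke Kolmogorov extension (or rather, uniqueness of a measure determined by its finite marginals on the basic clopen sets) to conclude there is at most one such measure.

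First I would reduce to the canonical structure: by Corollary~\ref{wlog-corollary} and Lemma~\ref{wlog-lemma}, it suffices to prove the statement for $\Mbar$, which is ultrahomogeneous (Proposition~\ref{canonical-is-ultrahom}) and highly homogeneous, in a relational language. So assume henceforth $\M$ is ultrahomogeneous, relational, and highly homogeneous; let $T$ be its \Fr\ theory. By Lemma~\ref{ultra-highhom}, for each $k$ all $k$-element substructures of $\M$ are isomorphic. Now let $\mu$ be any invariant measure concentrated on the orbit of $\M$. The key point is that $\mu$ is determined by the values $\mu(\llrr{\varphi(0,\dots,n-1)})$ as $\varphi$ ranges over quantifier-free $L$-formulas in $n$ free variables and $n$ ranges over $\Nats$, since such sets (together with their translates under $\sym$, which by invariance have the same measure) form a basis closed under finite intersection and generate the Borel $\sigma$-algebra; two Borel probability measures agreeing on such a $\pi$-system agree everywhere. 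So it is enough to show these values are forced.

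The main step: fix $n$ and a quantifier-free $\varphi(x_0,\dots,x_{n-1})$. Since $\mu$ concentrates on the orbit of $\M$, we may restrict attention to the event that $0,1,\dots,n-1$ are interpreted as $n$ distinct elements (the complementary diagonal events have $\mu$-measure determined by a smaller $n$, so we induct, or simply note $\mu$ is concentrated on structures satisfying $i \neq j$ for $i \neq j$ since $\dcl$ is trivial hence in particular all elements distinct — actually equality is inherited, so this is automatic). Among structures in the orbit of $\M$, the substructure on $\{0,\dots,n-1\}$ is always isomorphic to the unique $n$-element substructure type of $\M$; high homogeneity says more, namely that any relabeling of that $n$-element set by a permutation again occurs as a substructure — so the isomorphism type of the $n$-element labeled substructure is invariant under the action of $S_n$ permuting the labels. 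Concretely: if $\sigma \in S_n$ and $\psi(x_0,\dots,x_{n-1})$ is the quantifier-free formula describing one such labeled substructure, then $\llrr{\psi(0,\dots,n-1)}$ and $\llrr{\psi(\sigma(0),\dots,\sigma(n-1))}$ are both (intersected with the orbit) nonempty and, by $\sym$-invariance of $\mu$, have equal measure. But high homogeneity forces every consistent complete quantifier-free $n$-type over the orbit to be of this single $S_n$-symmetric shape — more precisely, there is a single isomorphism type $\tau$ of $n$-element structure, and the complete quantifier-free $n$-types realized in the orbit are exactly the $n!/|\Aut(\tau)|$ distinct labelings of $\tau$, all $S_n$-conjugate. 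Hence by invariance $\mu$ assigns each of them the same value, and since they partition (the orbit portion of) $\Str_L$ into finitely many pieces of total measure $1$, each gets measure exactly $1/(\text{number of such types})$. Thus $\mu(\llrr{\varphi(0,\dots,n-1)})$ is the number of these labelings satisfying $\varphi$, divided by the total — a quantity depending only on $\M$ and $\varphi$, not on $\mu$. Therefore any two invariant measures on the orbit agree on the generating $\pi$-system, hence are equal.

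The main obstacle I anticipate is the bookkeeping around the $S_n$-action on complete quantifier-free $n$-types: one must verify cleanly that high homogeneity of $\M$ (equivalently, of $\Aut(\M)$) exactly says the set of complete quantifier-free $n$-types realized in the orbit forms a single $S_n$-orbit under relabeling, and that $\sym$-invariance of $\mu$ restricted to these finitely many types acts transitively enough to equalize their masses. This requires care because $\sym$-invariance gives equality of $\mu$ on $\llrr{\psi(\bar a)}$ and $\llrr{\psi(\bar b)}$ whenever $\bar a, \bar b$ are tuples of distinct naturals of the same length related by some permutation of $\Nats$ — which is automatic — but the content is that high homogeneity guarantees $\llrr{\psi(\sigma \bar{0})}$ is still a consistent type over the orbit for every $\sigma \in S_n$, i.e., that permuting labels does not leave the orbit. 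Once that is pinned down, the rest (partition into finitely many equal-measure classes, hence forced values, hence a $\pi$-system argument for uniqueness) is routine.
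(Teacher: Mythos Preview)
Your proposal is correct and follows essentially the same approach as the paper: show that the complete quantifier-free $n$-types realized in $\M$ form a single $S_n$-orbit (by high homogeneity), hence all receive equal $\mu$-mass by $\sym$-invariance, hence each has mass $1/\alpha_n$ where $\alpha_n$ is their number, and then conclude by noting these sets generate the Borel $\sigma$-algebra on the orbit. The only difference is that your initial reduction to the canonical structure is unnecessary --- the paper works directly with qf-types in $\Lwow(L)$ for an arbitrary highly homogeneous $\M$, without passing through $\Mbar$ or invoking ultrahomogeneity --- but your detour is harmless and the core argument is the same.
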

\begin{proof}
Let $n\in\Nats$ and let $p$ be a qf-type of $\Lwow(L)$ in $n$ variables that is realized in $\M$.
Because $\M$ is highly homogeneous, 
for any qf-type $q$ of $\Lwow(L)$ in $n$ variables that is realized in $\M$,
there is some $\tau\in S_n$ such that
\[
\M \models (\forall x_0\cdots x_{n-1}) 
\ \bigl(
p(x_0, \ldots, x_{n-1}) 
\leftrightarrow
q(x_{\tau(0)}, \ldots, x_{\tau(n-1)})\bigr) .
\]

Suppose $\mu$ is an
invariant measure on 
$\Str_L$ concentrated on the 
orbit
of
$\M$.
Then
for any $k_0, \ldots, k_{n-1} \in \Nats$, we have
\[
\mu\bigl ( \llrr{p(k_0, \ldots, k_{n-1})}  \bigr) = 
\mu\bigl ( \llrr{q(k_{\tau(0)}, \ldots, k_{\tau(n-1)})}  \bigr).
\]
By the $\sym$-invariance of $\mu$, we have
\[
\mu\bigl ( \llrr{q(k_{\tau(0)}, \ldots, k_{\tau(n-1)})} \bigr) = 
\mu\bigl ( \llrr{q(k_0, \ldots, k_{n-1})} \bigr).
\]
Let $\alpha_n$ be the number of
distinct qf-types of $\Lwow(L)$ in $n$-many variables that are realized in $\M$.
Note that $\alpha_n \leq n!$
by the high homogeneity of $\M$.
Then
\[
\mu\bigl ( \llrr{p(k_0, \ldots, k_{n-1})} \bigr) = 
\frac 1 {\alpha_n}.
\]
Sets of the form $\llrr{p(k_0, \ldots, k_{n-1})}$ generate the $\sigma$-algebra of
Borel subsets of the isomorphism class of $\M$ in $\Str_L$,
and so $\mu$ must be the unique measure determined in this way.
\end{proof}

Putting 
the previous two results 
together, we obtain the following.

\begin{proposition}
\label{combined-hh-prop}
Let $L$ be relational and let $\M\in\Str_L$ be ultrahomogeneous.
If $\M$ is
highly homogeneous, then there is a unique invariant measure on the
isomorphism class of $\M$ in $\Str_L$.
\end{proposition}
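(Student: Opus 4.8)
The plan is to combine the two preceding lemmas directly, since Proposition~\ref{combined-hh-prop} is essentially their conjunction with the extra hypothesis that $\M$ is ultrahomogeneous made explicit. First I would invoke Lemma~\ref{highly-homog-meas-from-struct}: since $L$ is relational and $\M\in\Str_L$ is ultrahomogeneous and highly homogeneous, that lemma yields the \emph{existence} of an invariant measure on the isomorphism class of $\M$ in $\Str_L$. Then I would invoke Lemma~\ref{at-most-one}: since $\M$ is highly homogeneous (note that this lemma needs no relationality or ultrahomogeneity hypothesis), there is \emph{at most one} invariant measure on that orbit. Putting these together, there is exactly one.

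More precisely, the proof I would write is just: ``By Lemma~\ref{highly-homog-meas-from-struct}, there is at least one invariant measure on the isomorphism class of $\M$ in $\Str_L$; by Lemma~\ref{at-most-one}, there is at most one. Hence there is a unique such measure.'' No further argument is needed because all the substantive work---the explicit construction of the Glasner--Weiss-type measure via trivial definable closure and Theorem~\ref{positive-AFP}, and the counting argument showing that high homogeneity pins down all finite-dimensional marginals---has already been carried out in the two cited lemmas.

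There is essentially no obstacle here: the proposition is a bookkeeping corollary, and the only thing to be careful about is confirming that the hypotheses of the two lemmas are met. Lemma~\ref{at-most-one} requires only that $\M$ be highly homogeneous, which is assumed; Lemma~\ref{highly-homog-meas-from-struct} requires $L$ relational and $\M$ ultrahomogeneous and highly homogeneous, all of which are assumed in the statement of Proposition~\ref{combined-hh-prop}. So the combination is immediate. If anything, the ``hard part'' was done earlier---in Lemma~\ref{highly-homog-meas-from-struct}, which routes existence through Cameron's classification (Theorem~\ref{highhom-perm-thm}), the observation that each of the five reducts of $(\Rationals,<)$ has trivial definable closure, and the existence half of the Ackerman--Freer--Patel theorem (Theorem~\ref{positive-AFP})---but none of that needs to be revisited here.
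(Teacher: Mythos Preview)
Your proposal is correct and matches the paper's proof essentially verbatim: the paper also simply cites Lemma~\ref{highly-homog-meas-from-struct} for existence and Lemma~\ref{at-most-one} for uniqueness, with no additional argument.
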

\begin{proof}
By~Lemma~\ref{highly-homog-meas-from-struct}, there is an invariant measure
on the isomorphism class of $\M$ in $\Str_L$.
On the other hand, 
by~Lemma~\ref{at-most-one}, this is the only invariant measure on the
isomorphism class of $\M$ in $\Str_L$.
\end{proof}

\subsection{Continuum-many ergodic invariant measures}

We now show that when
a countable ultrahomogeneous structure in a relational language admits an invariant measure but is not highly homogeneous,
there are continuum-many ergodic invariant measures on its orbit.
We do this by constructing a continuum-sized class of 
\emph{reweighted} measures $m^\ww$  that give rise to distinct 
measures $\mu_{(\PP, m^\ww)}$ on 
the orbit of the structure,
for some appropriate $\PP$.
This will allow us to complete the proof of our main result, Theorem~\ref{simpler-theorem}.
We start with some definitions.

\begin{definition}
A
\defn{partition} of $\Reals$
is
a collection of subsets of $\Reals$ that are non-overlapping and whose union is $\Reals$.
By \defn{half-open interval}, we mean a non-empty, left-closed, right-open interval of $\Reals$, 
including the cases
$\Reals$,
$(-\infty, c)$, and 
$[c, \infty)$ 
for $c\in\Reals$.
A \defn{weight} $\ww$ consists of a partition of $\Reals$ into a finite set
$\II_\ww$ of 
finite unions of
half-open
intervals,
along with a map $u_\ww\colon \II_\ww \to
\Rplus$ 
that assigns
a positive real number to
each element of $\II_\ww$
and satisfies
\[
\sum_{I \in \II_\ww} u_\ww(I) = 1.
\]
Given a measure $m$ on $\Reals$, the \defn{reweighting} $m^\ww$ of $m$ by a
weight $\ww$ is the measure on $\Reals$ defined by
\[m^\ww(B) = \sum_{I \in \II_\ww} u_\ww(I) \, \frac{m(B\cap I)}{m(I)}
\]
for all Borel sets $B\subseteq\Reals$.
\end{definition}

The following is immediate from the definition of a weight.

\begin{lemma}
\label{reweighting-also}
Let $m$ be a nondegenerate continuous
probability measure  on $\Reals$, and let $\ww$
be a weight. Then $m^\ww$, the reweighting of $m$ by $\ww$, is
also a nondegenerate continuous
probability measure  on $\Reals$.
\end{lemma}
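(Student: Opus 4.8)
The statement to prove is Lemma~\ref{reweighting-also}: if $m$ is a nondegenerate continuous probability measure on $\Reals$ and $\ww$ is a weight, then $m^\ww$ is also a nondegenerate continuous probability measure on $\Reals$.

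The plan is to verify the three required properties — being a probability measure (countable additivity plus total mass $1$), continuity (no atoms), and nondegeneracy (positive on nonempty opens) — directly from the defining formula $m^\ww(B) = \sum_{I\in\II_\ww} u_\ww(I)\,m(B\cap I)/m(I)$, using that $\II_\ww$ is a finite partition of $\Reals$ and that each $I\in\II_\ww$ is a finite union of half-open intervals, hence a nonempty open-minus-endpoints set with $m(I)>0$ (the latter because $m$ is nondegenerate, so $m$ is positive on the nonempty interior of each half-open interval composing $I$; this also shows the formula is well-defined, i.e., no division by zero).

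First I would note that $m^\ww$ is a nonnegative, countably additive set function: it is a finite nonnegative linear combination of the measures $B\mapsto m(B\cap I)/m(I)$, each of which is a Borel probability measure (the normalized restriction of $m$ to $I$), so countable additivity is inherited termwise from $m$. For the total mass, $m^\ww(\Reals)=\sum_{I\in\II_\ww} u_\ww(I)\,m(\Reals\cap I)/m(I)=\sum_{I\in\II_\ww} u_\ww(I)=1$ using that each $m(I)/m(I)=1$ and the normalization condition on $u_\ww$; so $m^\ww$ is a probability measure. For continuity, fix any singleton $\{x\}$; then $m^\ww(\{x\})=\sum_{I\in\II_\ww} u_\ww(I)\,m(\{x\}\cap I)/m(I)$, and each $m(\{x\}\cap I)$ is either $m(\emptyset)=0$ or $m(\{x\})=0$ since $m$ is continuous, so every term vanishes and $m^\ww(\{x\})=0$.

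Finally, for nondegeneracy, let $U\subseteq\Reals$ be nonempty open. Since $\II_\ww$ partitions $\Reals$, there is some $I\in\II_\ww$ with $U\cap I\neq\emptyset$; writing $I$ as a finite union of half-open intervals $[a,b)$, the nonempty set $U\cap I$ must meet the interior of at least one such interval (its only points not in the interior are finitely many left endpoints, which cannot exhaust the nonempty relatively open set $U\cap I$), so $U\cap I$ contains a nonempty open subset of $\Reals$. Then $m(U\cap I)>0$ by nondegeneracy of $m$, and hence $m^\ww(U)\geq u_\ww(I)\,m(U\cap I)/m(I)>0$, using $u_\ww(I)>0$ and $m(I)>0$. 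I do not anticipate any genuine obstacle here; the only point requiring a moment's care is ensuring $m(I)>0$ so the reweighting formula is well-defined, and that a nonempty relatively open subset of a finite union of half-open intervals contains a genuine nonempty open interval — both of which follow from $m$ being nondegenerate and from $\II_\ww$ consisting of finite unions of half-open intervals with nonempty interiors.
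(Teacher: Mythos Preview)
Your proof is correct; the paper itself offers no proof beyond declaring the lemma ``immediate from the definition of a weight,'' and your direct verification of the three properties (probability measure, continuity, nondegeneracy) is exactly the natural elaboration of that claim. The only mildly delicate point --- that each $I\in\II_\ww$ has $m(I)>0$ and that $U\cap I$ contains a genuine open interval --- you have handled correctly.
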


We then 
obtain the following corollary.

\begin{corollary}
\label{just-as}
Let $L$ be relational, let $\PP$ be a Borel $L$-structure 
that strongly witnesses a
pithy $\Pi_2$ theory $T$, and
let $m$ be a nondegenerate continuous
probability measure  on $\Reals$. Let $\ww$
be 
a weight.
Then 
$\mu_{(\PP, m^\ww)}$
is concentrated on the set of structures in $\Str_L$ that are
models of $T$, 
just as $\mu_{(\PP, m)}$
is.
\end{corollary}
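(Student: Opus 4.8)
The plan is to chain together the two ingredients that have just been put in place, with essentially no new work. First I would invoke Lemma~\ref{reweighting-also} to observe that $m^\ww$ is again a nondegenerate continuous probability measure on $\Reals$. The entire point of the hypothesis that $\PP$ \emph{strongly} witnesses $T$, in the sense of Definition~\ref{Twitness}, is that the pair $(\PP, m')$ witnesses $T$ for \emph{every} nondegenerate continuous probability measure $m'$ on $\Reals$; applying this with $m' \defas m^\ww$ yields that $(\PP, m^\ww)$ witnesses $T$.

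Next I would feed this into Proposition~\ref{BorelLStructuresWitnessingTLeadToInvariantMeasures}. Its hypotheses are met: $L$ is relational, $T$ is a countable pithy $\Pi_2$ theory of $\Lwow(L)$, $\PP$ is a Borel $L$-structure, and $(\PP, m^\ww)$ witnesses $T$ with $m^\ww$ continuous. The conclusion is exactly that $\mu_{(\PP, m^\ww)}$ is concentrated on the set of structures in $\Str_L$ that are models of $T$. The same proposition applied with $m$ in place of $m^\ww$ (legitimate since $m$ is itself nondegenerate and continuous, so $(\PP,m)$ witnesses $T$ by strong witnessing) gives the parenthetical ``just as $\mu_{(\PP, m)}$ is'' clause.

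There is no real obstacle here; the only thing to be careful about is that ``strong witnessing'' is precisely the quantifier-over-measures statement needed to substitute $m^\ww$ for $m$, and that Lemma~\ref{reweighting-also} guarantees $m^\ww$ lies in the class of measures over which that quantifier ranges. The substance of the corollary lies in having isolated the reweighting construction in the preceding definition and lemma; the corollary itself is a bookkeeping step, recorded so that later one may let $\ww$ range over a continuum-sized family while keeping every $\mu_{(\PP, m^\ww)}$ concentrated on the same orbit.
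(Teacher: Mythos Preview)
Your proposal is correct and follows essentially the same approach as the paper: invoke Lemma~\ref{reweighting-also} to see that $m^\ww$ is nondegenerate and continuous, use the definition of strong witnessing to conclude that both $(\PP,m)$ and $(\PP,m^\ww)$ witness $T$, and then apply Proposition~\ref{BorelLStructuresWitnessingTLeadToInvariantMeasures}.
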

\begin{proof}
Let $L$, $\PP$, $T$, $m$, and $\ww$ be as stated. 
By Lemma~\ref{reweighting-also}, $m^\ww$ is also a nondegenerate continuous probability 
measure.
Therefore, because
$\PP$ strongly witnesses $T$, both $(\PP,m)$ and $(\PP, m^\ww)$ witness $T$.
Hence both $\mu_{(\PP, m)}$ and $\mu_{(\PP, m^\ww)}$
are concentrated
on 
the set of
models of $T$ in $\Str_L$ by Proposition~\ref{BorelLStructuresWitnessingTLeadToInvariantMeasures}.
\end{proof}

We now show that 
when $\M$ is not highly homogeneous but admits an invariant measure, reweighting can be used to obtain
continuum-many ergodic invariant measures
on the isomorphism class of $\M$ in $\Str_L$.
Specifically,
suppose $L$ is relational, $\M\in\Str_L$ is ultrahomogeneous,
and $T$ is the \Fr\ theory of $\M$.
Then as $\ww$ ranges over weights, we will see that
there are continuum-many measures 
$\mu_{(\PP, m^\ww)}$,
where $\PP$ is a Borel $L$-structure that strongly witnesses $T$
and $m$ is a nondegenerate continuous probability measure on $\Reals$. 

We start with 
two technical results.
Recall that $S_n$ is the group of permutations of $\{0, 1, \ldots, n-1\}$.

\begin{lemma}\label{equ}
Fix $n, \ell\in\mathbb{N}$.
Suppose $\{a_s\}_{s\in \{0,1,\ldots, \ell\}^n}$ is a collection of non-negative reals with the following properties:
\begin{itemize}
\item[(a)] For 
each
$\sigma\in S_n$ and $s,t\in\{0,1,\ldots, \ell\}^n$, if $s\circ \sigma=t$ then $a_s=a_t$. 
\item[(b)]
For some $s,t\in \{0,1,\ldots, \ell\}^n$, we have $a_s\neq a_t$.
\end{itemize}
Then 
as the variables $\lambda_0,\ldots,\lambda_{\ell}$ range over
positive reals
such that
\[
\label{sum}
\lambda_0+\cdots+\lambda_\ell=1,
\]
the 
polynomial
\begin{equation}\label{poly}
\sum_{s\in \{0,1,\ldots, \ell\}^n} a_s\lambda_{s(0)}\cdots\lambda_{s(n-1)}
\tag{$\spadesuit$}
\end{equation}
assumes continuum-many values.
\end{lemma}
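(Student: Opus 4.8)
The plan is to exhibit the polynomial $P(\lambda_0, \ldots, \lambda_\ell) = \sum_{s} a_s \lambda_{s(0)} \cdots \lambda_{s(n-1)}$ as a genuinely non-constant polynomial on the open simplex $\Delta = \{(\lambda_0, \ldots, \lambda_\ell) \st \lambda_i > 0, \ \sum_i \lambda_i = 1\}$, and then observe that any non-constant polynomial restricted to a connected set with more than one point takes continuum-many values (since its image is a connected subset of $\Reals$ containing at least two points, hence an interval). So everything reduces to showing $P$ is not constant on $\Delta$.

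First I would set up coordinates: parametrize $\Delta$ by $(\lambda_0, \ldots, \lambda_{\ell-1})$ ranging over the open set $\{\lambda_i > 0, \ \sum_{i<\ell}\lambda_i < 1\}$ with $\lambda_\ell = 1 - \sum_{i<\ell}\lambda_i$, so that $P$ becomes a polynomial in $\ell$ free variables on a nonempty open subset of $\Reals^\ell$. A polynomial is constant on a nonempty open set if and only if it is the constant polynomial, so it suffices to show that after this substitution $P$ is not the constant polynomial. Equivalently — and this is cleaner — it suffices to find two points of $\Delta$ at which $P$ takes different values, since then $P$ is certainly not constant on the open set, hence takes continuum-many values there by the connectedness argument above.

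To find such points, I would exploit hypotheses (a) and (b). Pick $s, t$ with $a_s \neq a_t$ as in (b); by hypothesis (a) we may assume $s$ and $t$ are not in the same $S_n$-orbit under the right action (precomposition by permutations), i.e. $s$ and $t$ are not rearrangements of each other as tuples — for if they were, (a) would force $a_s = a_t$. Thus $s$ and $t$ determine different \emph{multisets} of indices from $\{0, 1, \ldots, \ell\}$. Now group the monomials of $P$ by the multiset they come from: for a multiset $M$ of size $n$ drawn from $\{0,\ldots,\ell\}$, all tuples $s$ with underlying multiset $M$ contribute the \emph{same} coefficient $a_M := a_s$ (by (a)), and the total monomial in the $\lambda$'s contributed by that multiset class is $a_M \cdot (\text{number of orderings of } M) \cdot \prod_i \lambda_i^{m_i}$, where $m_i$ is the multiplicity of $i$ in $M$. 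Since distinct multisets give distinct monomials $\prod_i \lambda_i^{m_i}$ in the polynomial ring $\Reals[\lambda_0, \ldots, \lambda_\ell]$ (before imposing $\sum \lambda_i = 1$), and the two multisets coming from $s$ and $t$ have different coefficients (the combinatorial multiplicities are positive and the $a$-values differ), $P$ is not a constant multiple of the power-sum $(\sum_i \lambda_i)^n$ — in particular $P$ is not constant on the simplex.

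**Main obstacle.** The one point requiring care is passing from "$P$ is not the constant polynomial on $\Reals^{\ell+1}$" to "$P$ is not constant on $\Delta$," because $\Delta$ lies inside the affine hyperplane $\sum_i \lambda_i = 1$, on which $P$ could in principle simplify. The clean way around this is to substitute out $\lambda_\ell = 1 - \sum_{i<\ell} \lambda_i$ first, obtaining a polynomial $\widetilde{P}$ in the $\ell$ algebraically independent variables $\lambda_0, \ldots, \lambda_{\ell-1}$, and then argue $\widetilde{P}$ is non-constant. Here I would use that the monomials $\prod_i \lambda_i^{m_i}$ with $m_\ell = 0$ and with $m_\ell$ minimal (say $m_\ell = 0$) survive this substitution without cancellation from the leading part, or alternatively compare values directly at two concrete points: e.g. compare $P$ at the barycenter against $P$ at a point where one $\lambda_i$ is taken close to $1$ — at the latter, $P$ is dominated by the single monomial from the constant tuple $(i,i,\ldots,i)$ with coefficient $a_{(i,\ldots,i)}$, and by choosing $i$ appropriately (using that not all $a_s$ are equal) one forces the two values apart. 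I expect the direct two-point comparison to be the most robust route, and the "distinct multisets give distinct monomials after dehomogenization" bookkeeping to be the only genuinely fiddly part.
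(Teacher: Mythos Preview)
Your overall strategy---reduce to showing $P$ is non-constant on the open simplex, then invoke connectedness---is the same as the paper's. Your route to non-constancy, however, is different and cleaner. The multiset regrouping
\[
P(\lambda) \;=\; \sum_{M} a_M\, c_M \prod_i \lambda_i^{m_i},
\qquad
\Bigl(\sum_i \lambda_i\Bigr)^n \;=\; \sum_{M} c_M \prod_i \lambda_i^{m_i}
\]
(with $c_M$ the multinomial coefficient) shows that $P = c\,(\sum_i \lambda_i)^n$ in $\Reals[\lambda_0,\ldots,\lambda_\ell]$ if and only if every $a_M$ equals $c$, which (a) and (b) together rule out. The step you flag as the ``main obstacle'' then follows directly from homogeneity: if $P$ were constant with value $c$ on $\Delta$, the degree-$n$ homogeneous polynomial $P - c(\sum_i \lambda_i)^n$ would vanish on the Zariski-dense subset $\Delta$ of the hyperplane $\{\sum_i \lambda_i = 1\}$, hence on the whole hyperplane, hence (by rescaling) on all of $\Reals^{\ell+1}$; so $P = c(\sum_i \lambda_i)^n$, a contradiction. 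You already had the pieces and did not need the detours.

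By contrast, the paper dehomogenizes (sets $\lambda_\ell = 1 - \sum_{i<\ell}\lambda_i$), assumes the resulting polynomial is constant, and proves by induction on $k$ that every $a_s$ with at most $k$ entries different from $\ell$ equals $a^* \defas a_{(\ell,\ldots,\ell)}$; the inductive step computes, via a binomial identity, that the coefficient of a suitable monomial in the dehomogenized polynomial is exactly $a_s - a^*$. Your argument bypasses this computation entirely.

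One genuine gap: your preferred fallback---comparing $P$ at the barycenter against $P$ near a vertex---can fail. Take $\ell = 1$, $n = 3$, with $a$-values $1$ on the constant tuples, $2$ on the multiset $\{0,0,1\}$, and $0$ on $\{0,1,1\}$. Then (a) and (b) hold, yet both vertex limits and the barycentric value $P(1/2,1/2)$ equal $1$, while $P(1/3,2/3) = 7/9$. So vertices and barycenter alone need not separate, and you really do need the homogeneity argument (or the paper's induction) to conclude.
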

\begin{proof}
In 
\eqref{poly}
substitute  $1-\sum_{i=0}^{\ell-1}\lambda_i$ for $\lambda_\ell$ to obtain a polynomial $P$ in $\ell$-many variables
$\lambda_0,\ldots, \lambda_{\ell-1}$.
We will show that $P$ is a non-constant polynomial, and therefore assumes continuum-many values
as
$\lambda_0,\ldots, \lambda_{\ell-1}$ 
range over positive reals such that
\[
\lambda_0+\cdots+\lambda_{\ell-1}<1.
\]
Suppose towards a contradiction that $P$ is a constant polynomial.
Let $a^*\defas a_{u}$,
where $u \in  \{0, 1, \ldots, \ell\}^n$
is the constant function taking the value $\ell$.
Consider, for
$0\le k \le n$,
the following claim $(\diamondsuit_k)$.
\begin{itemize}
\item[$(\diamondsuit_k)$] 
For every $j$ such that $0\leq j\leq k$, whenever $s\in \{0,1,\ldots, \ell\}^n$
 is such that  exactly $j$-many of $s(0),\ldots, s(n-1)$
 are different from $\ell$,  then $a_s=a^*$.
\end{itemize}
\noindent 
The statement $(\diamondsuit_n)$ implies
that for every $s\in\{0,1,\ldots, \ell\}^n$, we have $a_s=a^*$, thereby contradicting (b). Hence it suffices to prove $(\diamondsuit_n)$,
which we now do by induction on $k$.

The statement $(\diamondsuit_0)$ is clear.
Now let $k$ be such that $1\le k \le n$, and
suppose that $(\diamondsuit_{k-1})$ holds. We
will show that $(\diamondsuit_k)$ holds.
Let
$s \in \{0,1,\ldots, \ell\}^n$  be such that  exactly $k$-many of $s(0),\ldots, s(n-1)$
 are different from $\ell$; we 
must prove
that  $a_s=a^*$.

Since $(\diamondsuit_{k-1})$ holds, by
(a)
we may assume
without loss of generality
that none of $s(0), s(1), \ldots, s(k-1)$ equals $\ell$ and that
\[s(k)=s(k+1)=\cdots=s(n-1)=\ell.\]
For $0 \le r \le \ell-1$ let $k_r$ denote the number of times that $r$ appears in the sequence $s(0), \ldots, s(k-1)$. In particular,
 \[\lambda_{s(0)}\lambda_{s(1)}\cdots \lambda_{s(k-1)}=\lambda_0^{k_0}\lambda_1^{k_1}\cdots\lambda_{\ell-1}^{k_{\ell-1}},\]
and $k = k_0+k_1+\cdots +k_{\ell-1}$.

Let $\beta$ be the coefficient of $\lambda_0^{k_0}\lambda_1^{k_1}\cdots\lambda_{\ell-1}^{k_{\ell-1}}$ in $P$.
 For $t_0, \ldots, t_{\ell-1}\in\Nats$
such that $t_0+\cdots +t_{\ell-1}\leq n$,
 let $\Gamma(t_0,t_1,\ldots,t_{\ell-1}) \in \{0, 1, \ldots, \ell\}^n$ 
be the
non-decreasing sequence of length $n$
consisting of $t_0$-many $0$'s, $t_1$-many $1$'s, \ldots, $t_{\ell-1}$-many  $t_{\ell-1}$'s, and $(n- \sum_{i=0}^{\ell-1} t_i)$-many $\ell$'s.
Define $C\defas \frac{n!}{k_0! k_1! \cdots k_{\ell-1}! (n-k)!}$. Then
\begin{equation}\label{heart}
\beta= 
C \sum_{t_i\leq k_i} a_{\Gamma(t_0,t_1,\ldots,t_{\ell-1})} {k_0 \choose t_0} \cdots {k_{\ell-1} \choose t_{\ell-1}}  (-1)^{k-\sum_{i=0}^{\ell-1} t_i}.
\tag{$\varheart$}
\end{equation}

Note that $a_s = a_{\Gamma(k_0,k_1,\ldots,k_{\ell-1})}$.
By $(\diamondsuit_{k-1})$, we also have $a^* = a_{\Gamma(t_0,t_1,\ldots,t_{\ell-1})}$
if $\sum_{i=0}^{\ell-1} t_i < k$, in particular whenever
each $t_i \le k_i$ for $0\le i \le \ell-1$ and 
$(t_0, t_1, \ldots, t_{\ell-1}) \neq (k_0,k_1,\ldots,k_{\ell-1})$. In other words, 
all 
subexpressions \linebreak
$a_{\Gamma(t_0,t_1,\ldots,t_{\ell-1})}$ appearing in \eqref{heart}
other than (possibly) $a_{\Gamma(k_0,k_1,\ldots,k_{\ell-1})}$ are
equal to $a^*$.

By the multinomial and binomial theorems,
\begin{eqnarray*}
\sum_{t_i\leq k_i} {k_0 \choose t_0} \cdots {k_{\ell-1} \choose t_{\ell-1}}  
(-1)^{k-\sum_{i=0}^{\ell-1} t_i}
&=& 
\sum_{t\leq k}\sum_{\substack{t_i\leq k_i \\ \sum_i t_i=t}} {k_0 \choose t_0} \cdots{k_{\ell-1} \choose t_{\ell-1}}  (-1)^{k-\sum_{i=0}^{\ell-1} t_i}\\
&=& \sum_{t\leq k}{k \choose t}   (-1)^{k-t} 
= (1-1)^k 
= 0.
\end{eqnarray*}
Therefore
\[
\beta= 
0 + (a_s - a^*) {k_0 \choose k_0} \cdots {k_{\ell-1} \choose k_{\ell-1}}
(-1)^{k-k}
= a_s - a^*.
\]

But by the assumption that  $P$ is a constant polynomial, 
$\beta= 0$, and so $a_s = a^*$, as desired.
\end{proof}

Using this lemma, we can prove the following.

\begin{proposition}\label{cmeas}
Let $m$ be a nondegenerate continuous probability measure, and let 
$n$ be a positive integer.
Suppose
$A\subseteq\Reals^n$ 
is
an $S_n$-invariant Borel
set such that $0< m^n(A)<1$. Then the family of reals $\{(m^\ww)^n(A)\colon \ww \text{  is a weight}\}$ has cardinality equal to the continuum.
\end{proposition}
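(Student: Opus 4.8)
The plan is to partition $\Reals$ into finitely many half-open intervals, use a continuous probability measure $m$ to pull back each interval to have some prescribed mass, and then express $(m^\ww)^n(A)$ as a polynomial in the weights, to which Lemma~\ref{equ} applies. Concretely, fix a nondegenerate continuous probability measure $m$, a positive integer $n$, and an $S_n$-invariant Borel set $A \subseteq \Reals^n$ with $0 < m^n(A) < 1$. I would first choose a partition of $\Reals$ into half-open intervals $I_0, \ldots, I_\ell$ (all of positive $m$-measure; this is possible since $m$ is continuous and nondegenerate) that is fine enough to ``detect'' the nontriviality of $A$ — that is, fine enough that not all of the conditional probabilities $m^n(A \cap (I_{s(0)} \times \cdots \times I_{s(n-1)})) / \prod_j m(I_{s(j)})$ agree as $s$ ranges over $\{0, 1, \ldots, \ell\}^n$. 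The existence of such a partition is the first thing to establish, and I return to it below.

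Given such a partition, for a weight $\ww$ supported on this same partition (i.e.\ $\II_\ww = \{I_0, \ldots, I_\ell\}$, with weights $\lambda_i \defas u_\ww(I_i)$), a direct computation from the definition of the reweighting gives
\[
(m^\ww)^n(A) = \sum_{s \in \{0,1,\ldots,\ell\}^n} a_s\, \lambda_{s(0)} \cdots \lambda_{s(n-1)},
\]
where
\[
a_s \defas \frac{m^n\bigl(A \cap (I_{s(0)} \times \cdots \times I_{s(n-1)})\bigr)}{m(I_{s(0)}) \cdots m(I_{s(n-1)})}.
\]
Each $a_s$ is a nonnegative real. Property (a) of Lemma~\ref{equ} — invariance of $\{a_s\}$ under $S_n$ acting on coordinates — follows from the $S_n$-invariance of $A$ together with the fact that $m^n$ and the denominator are invariant under permuting coordinates: if $t = s \circ \sigma$, then $I_{t(0)} \times \cdots \times I_{t(n-1)}$ is the image of $I_{s(0)} \times \cdots \times I_{s(n-1)}$ under the coordinate permutation $\sigma$, which preserves both $A$ and $m^n$. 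Property (b) — that the $a_s$ are not all equal — is exactly what the choice of a sufficiently fine partition is meant to guarantee. Lemma~\ref{equ} then tells us that the polynomial takes continuum-many values as $\lambda_0, \ldots, \lambda_\ell$ range over positive reals summing to $1$; since each such choice of $(\lambda_i)$ determines a weight $\ww$, the family $\{(m^\ww)^n(A) : \ww \text{ is a weight}\}$ has cardinality at least, and hence exactly, the continuum.

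The main obstacle is establishing property (b): producing a partition into half-open intervals for which the conditional densities $a_s$ are not all equal. I would argue by contraposition. Suppose that for \emph{every} finite partition of $\Reals$ into half-open intervals of positive measure, all the resulting $a_s$ are equal; since for the trivial one-block partition $a_s = m^n(A)$, this common value must always be $m^n(A)$. Taking $n$-fold products of progressively finer interval partitions, the sets $I_{s(0)} \times \cdots \times I_{s(n-1)}$ generate the Borel $\sigma$-algebra of $\Reals^n$ (modulo the diagonal-type null sets, which $m^n$ ignores since $m$ is continuous), so the condition $m^n(A \cap E) = m^n(A)\, m^n(E)$ for all such product boxes $E$ forces $m^n(A) \in \{0,1\}$ by a standard $\pi$-system / martingale-convergence argument — contradicting $0 < m^n(A) < 1$. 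Hence some finite interval partition yields non-constant $\{a_s\}$, and that is the partition to use. Care is needed only in handling the measure-zero diagonal and in checking that finite unions of half-open intervals (as allowed in the definition of a weight) suffice; both are routine once the $\pi$-system generating the Borel sets is identified.
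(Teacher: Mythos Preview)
Your proposal is correct and follows essentially the same approach as the paper: partition $\Reals$, express $(m^\ww)^n(A)$ as the polynomial $\sum_s a_s \lambda_{s(0)}\cdots\lambda_{s(n-1)}$, verify hypotheses (a) and (b) of Lemma~\ref{equ}, and apply it. The only difference is in how the partition witnessing (b) is found. The paper introduces the conditional measure $\tm(B) = m^n(A\cap B)/m^n(A)$, observes that $\tm \neq m^n$ (since $\tm(A^c)=0$ while $m^n(A^c)>0$), and hence finds a single box $\prod_i X_i$ of half-open intervals on which they differ; the partition $\JJ$ is then taken to be the atoms of the Boolean algebra generated by $X_0,\ldots,X_{n-1}$, and one of the resulting $a_v$ is shown directly to differ from $m^n(A)$. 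Your contrapositive argument --- if all $a_s$ equalled $m^n(A)$ for every interval partition, then $m^n(A\cap E)=m^n(A)\,m^n(E)$ on a generating $\pi$-system, forcing $m^n(A)\in\{0,1\}$ --- is the same measure-theoretic fact run backwards. (Your aside about diagonal null sets is unnecessary: boxes with half-open sides already generate the full Borel $\sigma$-algebra on $\Reals^n$, not merely modulo null sets.)
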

\begin{proof}
Because
$m^n(A)>0$,
we may define $\tm$, the conditional distribution of $m^n$ given $A$,
by \[\tm(B)\defas\frac{m^n(A\cap B)}{m^n(A)}\]
for every Borel set $B\subseteq \Reals^n$.

Because $m^n(A)<1$, we have
$m^n(\Reals^n - A)=1-m^n(A)>0$.
Furthermore
$\tm(\Reals^n - A)=0$,
and so
$m^n\neq \tm$. 
Therefore there 
are some 
half-open intervals $X_0, X_1, \ldots, X_{n-1}$
such that
\[\textstyle
m^n\bigl(\prod_{i=0}^{n-1} X_i\bigr)\neq \tm\bigl(\prod_{i=0}^{n-1} X_i\bigr).
\]
Because $m$ is nondegenerate, $m(X_i)>0$ for each $i\le n-1$.

Define  
the 
partition 
$\JJ$
of $\Reals$ 
to be the family of 
non-empty
sets 
of the form
$X_0^{e_0}\cap\ldots\cap X_{n-1}^{e_{n-1}}$
for some 
$e_0,\ldots,e_{n-1}\in\{+, -\}$,
where $X_j^+\defas X_j$ and
$X_j^-\defas \Reals - X_j$.
Let $\ell \defas |\JJ| - 1$, and let $Y_0, \ldots, Y_\ell$ be some enumeration of $\JJ$.

For $s\in \{0,1,\ldots, \ell\}^n$, set 
\[
a_s\defas 
\frac{m^n\bigl(A\cap \prod_{i=0}^{n-1} Y_{s(i)}\bigr)}{m^n\bigl(\prod_{i=0}^{n-1} Y_{s(i)}\bigr)}.
\]
Note that there exists 
some
$\ts \in \{0,1,\ldots, \ell\}^n$
such that
\[\textstyle
m^n\bigl(\prod_{i=0}^{n-1} Y_{\ts(i)}\bigr)
\neq
\tm \bigl(\prod_{i=0}^{n-1} Y_{\ts(i)}\bigr),
\]
i.e.,
\[\textstyle
m^n\bigl(\prod_{i=0}^{n-1} Y_{\ts(i)}\bigr)
\neq 
\dfrac{m^n\bigl(A\cap \prod_{i=0}^{n-1} Y_{\ts(i)}\bigr)}{m^n(A)}
,
\]
and hence
\[
m^n(A)
\neq 
\frac{m^n\bigl(A\cap \prod_{i=0}^{n-1} Y_{\ts(i)}\bigr)}{m^n\bigl(\prod_{i=0}^{n-1} Y_{\ts(i)}\bigr)}
= a_v
.
\]

Observe that
if 
for $s\in \{0,1,\ldots, \ell\}^n$, 
the values $a_s$
are all equal, then 
this value is
$m^n(A)$.  But we have just shown that 
$a_\ts \neq m^n(A)$,
 and so
$a_s\neq a_t$
for some $s,t$.
Further, since
$A$ is $S_n$-invariant, from the definition of 
$a_s$
we have that
for every $\sigma\in S_n$, and every $s$ and $t$, if $s\circ\sigma=t$ then $a_s=a_t$.

Hence the assumptions of Lemma~\ref{equ} are satisfied, and so
the expression
\[ \sum_{s\in \{0,1,\ldots, \ell\}^n} a_s\lambda_{s(0)}\cdots\lambda_{s(n-1)}\]
takes
continuum-many 
values as
$\lambda_0,\ldots,\lambda_\ell$ range over positive reals
satisfying $\lambda_0+\cdots+\lambda_\ell=1$.
Each 
such
$\lambda_0,\ldots,\lambda_\ell$ together with the partition $\JJ$ 
yields
a weight $\ww$ via $u_\ww(Y_i) \defas \lambda_i$ for $i \le \ell$.
Then
the corresponding reweightings $m^\ww$ satisfy
\begin{eqnarray*}
(m^\ww)^n(A) &=&
\sum_{s\in \{0,1,\ldots, \ell\}^n} 
\frac{m^n\bigl(A\cap \prod_{i=0}^{n-1} Y_{s(i)}\bigr)}{m^n\bigl(\prod_{i=0}^{n-1} Y_{s(i)}\bigr)}
\lambda_{s(0)}\cdots\lambda_{s(n-1)}\\
&=&
\sum_{s\in \{0,1,\ldots, \ell\}^n} a_s\lambda_{s(0)}\cdots\lambda_{s(n-1)}.
\end{eqnarray*}
We conclude that the family $\{(m^\ww)^n(A)\colon \ww \text{  is a weight}\}$ has cardinality equal to the continuum.
\end{proof}

Now we may prove our main result about weights.

\begin{proposition}
\label{key-prop}
Let $L$ be relational, let $\M\in\Str_L$ be ultrahomogeneous, and let
$T$ 
be the \Fr\ theory of $\M$.
Suppose that $\M$ is not highly homogeneous.
Further suppose
that 
$\PP$ 
is 
a Borel $L$-structure 
that strongly
witnesses $T$, and that
$m$ is a nondegenerate continuous probability measure on $\Reals$.
Then 
there are continuum-many measures
$\mu_{(\PP, m^\ww)}$, 
as $\ww$ ranges over weights.
\end{proposition}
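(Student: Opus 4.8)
The plan is to reduce the claim to Proposition~\ref{cmeas} by exhibiting a single $n$ and a single $S_n$-invariant Borel set $A\subseteq\Reals^n$ with $0<m^n(A)<1$ whose measure under the reweightings $(m^\ww)^n$ separates continuum-many weights, and then transferring this separation to the measures $\mu_{(\PP,m^\ww)}$ via Lemma~\ref{mu-vs-m-n}. First I would use the hypothesis that $\M$ is not highly homogeneous: by Lemma~\ref{ultra-highhom}, since $\M$ is ultrahomogeneous and relational, there is some $n\in\Nats$ and two $n$-element substructures of $\M$ that are not isomorphic. Equivalently, there are at least two distinct qf-types in $n$ variables realized in $\M$, so the number $\alpha_n$ of such types satisfies $2\le\alpha_n$, and (since not all orderings of a tuple need give the same type) $1<\alpha_n$; pick a single qf-type $p(\x)$ in $n$ variables realized in $\M$ that is not realized by every injective $n$-tuple — more precisely, choose $p$ so that the set $\llrr{p(0,\ldots,n-1)}$ has $\mu_{(\PP,m)}$-measure strictly between $0$ and $1$ (such a $p$ exists because the finitely many qf-types partition the realized injective tuples into $\alpha_n\ge 2$ nonempty pieces, each of positive measure by nondegeneracy, and no single piece is everything).

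Next I would set $A\defas\{\a\in\Reals^n\st\PP\models p(\a)\}$. This set is Borel because $\PP$ is a Borel $L$-structure and $p$ is a (countable conjunction of) atomic and negated-atomic formulas; it is $S_n$-invariant only after we symmetrize — so instead I would take $A$ to be the union of $\{\a\st\PP\models q(\a)\}$ over all qf-types $q$ obtained from $p$ by permuting variables, which is genuinely $S_n$-invariant, still Borel, and still has $0<m^n(A)<1$ by the same counting argument together with Lemma~\ref{mu-vs-m-n} (which identifies $m^n(A)$ with $\mu_{(\PP,m)}(\llrr{\bigvee_\tau p^\tau(0,\ldots,n-1)})$). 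By Lemma~\ref{reweighting-also} each $m^\ww$ is again a nondegenerate continuous probability measure, and by Corollary~\ref{just-as} each $\mu_{(\PP,m^\ww)}$ is concentrated on the models of $T$, hence on the orbit of $\M$; moreover Lemma~\ref{mu-vs-m-n} applies verbatim with $m$ replaced by $m^\ww$, giving
\[
\mu_{(\PP,m^\ww)}\bigl(\llrr{\textstyle\bigvee_\tau p^\tau(0,\ldots,n-1)}\bigr)=(m^\ww)^n(A).
\]
Now apply Proposition~\ref{cmeas} to this $n$ and this $A$: the family $\{(m^\ww)^n(A)\st\ww\text{ a weight}\}$ has cardinality continuum. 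Since the displayed equation shows that two weights $\ww_1,\ww_2$ with $(m^{\ww_1})^n(A)\neq(m^{\ww_2})^n(A)$ give measures $\mu_{(\PP,m^{\ww_1})}$ and $\mu_{(\PP,m^{\ww_2})}$ differing on a Borel set, these measures are distinct. Hence there are at least continuum-many distinct $\mu_{(\PP,m^\ww)}$; and there are at most continuum-many because each is a Borel probability measure on the standard Borel space $\Str_L$, of which there are at most $2^{\aleph_0}$. Therefore the family has cardinality exactly the continuum.

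The main obstacle I anticipate is the bookkeeping around making $A$ genuinely $S_n$-invariant while keeping $0<m^n(A)<1$ and while keeping $A$ expressible as $\llrr{\varphi(0,\ldots,n-1)}$-content for a single $\Lwow(L)$-formula $\varphi$ with $n$ free variables, so that Lemma~\ref{mu-vs-m-n} can be invoked directly; the cleanest route is to let $\varphi(\x)$ be the disjunction over $\tau\in S_n$ of the variable-permuted copies of a single qf-type $p$, note that $\varphi$ is $S_n$-invariant as a formula (up to logical equivalence), and use ultrahomogeneity of $\M$ to know $\varphi$ is equivalent over $\M$ (hence over $\PP$, which models $T$) to a quantifier-free formula — exactly as in the proof of Lemma~\ref{mu-vs-m-n}. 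Everything else is a direct chaining of Lemmas~\ref{reweighting-also} and~\ref{mu-vs-m-n}, Corollary~\ref{just-as}, and Proposition~\ref{cmeas}, together with the cardinality bound $2^{\aleph_0}$ from above.
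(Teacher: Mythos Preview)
Your approach is correct and essentially identical to the paper's: symmetrize a quantifier-free formula (or type) distinguishing two non-isomorphic $n$-element substructures to obtain an $S_n$-invariant Borel $A\subseteq\Reals^n$, use Lemma~\ref{mu-vs-m-n} to get $0<m^n(A)<1$, apply Proposition~\ref{cmeas}, and transfer back via Lemma~\ref{mu-vs-m-n} (with $m^\ww$ in place of $m$, justified by Lemma~\ref{reweighting-also}). One minor slip worth flagging: there need not be only \emph{finitely} many qf-types in $n$ variables realized in $\M$ when $L$ is infinite, and ``nondegeneracy'' is not quite the reason each has positive measure --- but this is harmless, since the two non-isomorphic $n$-element substructures already give two disjoint symmetrized events, each of positive $\mu_{(\PP,m)}$-measure (being realized in $\M$), hence each of measure $<1$, which is exactly how the paper argues.
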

\begin{proof}
By Proposition~\ref{BorelLStructuresWitnessingTLeadToInvariantMeasures},
$\mu_{(\PP, m)}$ is concentrated on the orbit of $\M$. 
Because $\M$ is not highly homogeneous, by Lemma~\ref{ultra-highhom}
there are non-isomor\-phic $n$-element substructures
$A_0$, $A_1$ 
of $\M$ 
for some $n\in\Nats$.
Fix an enumeration of each of $A_0$, $A_1$, and 
for $i = 0, 1$
let
$\varphi_i$ be a quantifier-free $\Lwow(L)$-formula in $n$-many free variables 
that 
is satisfied by
$A_i$ 
and 
not by
$A_{1-i}$
(in their respective enumerations).
Note that
\[
0< \mu_{(\PP, m)}\Bigl(
\bigllrr{\bigvee_{\sigma\in S_n}\varphi_i(\sigma(0), \ldots, \sigma(n-1))}
\Bigr)
\]
for $i=0,1$, as $\varphi_i$ is realized in $\M$. Furthermore, as
\[
\mu_{(\PP, m)}\Bigl(\bigllrr{\bigvee_{\sigma\in S_n}\varphi_0(\sigma(0), \ldots, \sigma(n-1))} \Bigr) 
\,
+
\,
\mu_{(\PP, m)}\Bigl(\bigllrr{\bigvee_{\sigma\in S_n}\varphi_1(\sigma(0), \ldots, \sigma(n-1))} \Bigr) 
\le 1
,
\]
we have
\[
\mu_{(\PP, m)}\Bigl(\bigllrr{\bigvee_{\sigma\in S_n}\varphi_i(\sigma(0), \ldots, \sigma(n-1))} \Bigr)
< 1
\]
for $i=0,1$.

Then, by Lemma~\ref{mu-vs-m-n}, we have
\[
0 < m^n \Bigl(\bigl\{ (a_0,\ldots, a_{n-1})  \in \Reals^n \st \PP \models \bigvee_{\sigma\in S_n}\varphi_0 (a_{\sigma(0)}, \ldots, a_{\sigma(n-1)})\bigr \}\Bigr)< 1.
\]

Hence by
Proposition~\ref{cmeas},
as $\ww$ ranges over weights,
\[
(m^\ww)^n \Bigl(\bigl\{ (a_0,\ldots, a_{n-1})  \in \Reals^n \st \PP \models \bigvee_{\sigma\in S_n}\varphi_0 (a_{\sigma(0)}, \ldots, a_{\sigma(n-1)})\bigr \}\Bigr)
\]
takes on continuum-many values.
Again by 
Lemma~\ref{mu-vs-m-n}, 
\[
\mu_{(\PP, m^\ww)}\Bigl(\bigllrr{\bigvee_{\sigma\in S_n}\varphi_i(\sigma(0), \ldots, \sigma(n-1))} \Bigr) 
\]
takes on continuum-many values as $\ww$ ranges over weights;
in particular, 
the
$\mu_{(\PP, m^\ww)}$
constitute continuum-many different measures.
\end{proof}

We are now able to complete the proof of our main theorem.

\proofof{Theorem~\ref{simpler-theorem}}
Given a countable structure $\N$ in a countable language,
by
Corollaries~\ref{wlog-corollary}
and \ref{trivialdcl-canonical}
and Lemma~\ref{wlog-lemma}, 
its canonical structure $\Nbar$ admits the same number of ergodic invariant measures as $\N$, is highly homogeneous if and only if $\N$ is, and has trivial definable closure if and only if $\N$ does. Hence it suffices to prove the theorem in the case where $\M\in\Str_L$
is the canonical structure of some countable structure in a countable language; in particular, 
where $\M$ is ultrahomogeneous 
(by Proposition~\ref{canonical-is-ultrahom})
and $L$ is relational (by Definition~\ref{def-canonical}).

By Theorem~\ref{negative-AFP},
if $\M$ has nontrivial
definable closure then its 
orbit
does not admit an invariant
measure, as claimed in (0). 

By Proposition~\ref{combined-hh-prop}, 
if
$\M$ is highly homogeneous
then 
its
orbit
admits a
unique invariant measure, as claimed in (1).

Clearly, the 
orbit
of $\M$
admitting 
0, 1, or continuum-many invariant measures are
mutually exclusive possibilities.
Hence
it remains to show that if $\M$ is not highly homogeneous and 
its 
orbit
admits an invariant measure, then 
this 
orbit
admits continuum-many ergodic invariant measures.

Again by Theorem~\ref{negative-AFP},
because the 
orbit
of $\M$ admits an
invariant measure, $\M$ must have trivial definable closure. 
Since $\M$ is ultrahomogeneous and $L$ is relational, by Proposition~\ref{AFP-Borel-L}
there is 
a Borel $L$-structure $\PP$ 
that strongly
witnesses the \Fr\ theory of $\M$.

Let $m$ be a nondegenerate continuous probability measure on $\Reals$.
By Proposition~\ref{key-prop},
as $\ww$ ranges over weights, there are continuum-many different measures
$\mu_{(\PP, m^\ww)}$.
By
Corollary~\ref{just-as},
each is an
invariant probability measure
concentrated on the 
orbit
of $\M$, and
by Proposition~\ref{ergodic-measure},
each 
is ergodic. Finally, there are at most continuum-many Borel measures on $\Str_L$.
\Endproofof




\vspace*{20pt}
\section*{Acknowledgments}

The authors would like to thank
Willem Fouch{\'e}, Yonatan Gutman, Alexander Kechris,
Andr{\'e} Nies, Arno Pauly, Jan Reimann, and Carol Wood
for helpful conversations.

This research was facilitated by the Dagstuhl Seminar on Computability,
Complexity, and Randomness (January 2012), the
conference on Graphs and
Analysis  at the
Institute for Advanced Study (June 2012),
the Buenos Aires Semester in Computability, Complexity, and Randomness
(March 2013),
the Arbeitsgemeinschaft on Limits of Structures at the Mathematisches
Forschungsinstitut Oberwolfach (March--April 2013), the Trimester
Program on Universality and Homogeneity of the Hausdorff
Research Institute for Mathematics at the University of Bonn
(September--December 2013), and the workshop on Analysis, Randomness, and
Applications at the University of South Africa (February 2014).

Work on
this publication by C.\,F.\ was made possible through the support of
ARO grant W911NF-13-1-0212
and grants from the
John Templeton Foundation and Google.
The opinions expressed in this
publication are those of the authors and do not necessarily reflect the
views of the John
Templeton Foundation.

\vspace*{30pt}

\bibliographystyle{amsnomr}

\providecommand{\bysame}{\leavevmode\hbox to3em{\hrulefill}\thinspace}
\providecommand{\MR}{\relax\ifhmode\unskip\space\fi MR }
\providecommand{\MRhref}[2]{%
  \href{http://www.ams.org/mathscinet-getitem?mr=#1}{#2}
}
\providecommand{\href}[2]{#2}

\end{document}